\newcommand{\Nat}{\mathbb{N}}
\newcommand{\Real}{{\mathbb{R}}}
\newcommand{\Com}{{\mathbb{C}}}
\newcommand{\T}{{\mathcal{T}}}
\newcommand{\dd}{{{\rm d}}}
\newcommand{\PT}{{\mathcal{PT}}}
\renewcommand{\H}{{\mathcal{H}}}
\newcommand{\sii}{L^2}
\newcommand{\Dom}{\mathop{\mathrm{Dom}}\nolimits}
\newcommand{\Ran}{\mathop{\mathrm{Ran}}\nolimits}
\newcommand{\Ker}{\mathop{\mathrm{Ker}}\nolimits}
\newcommand{\Num}{\mathop{\mathrm{Num}}\nolimits}
\newcommand{\cf}{\emph{cf.}}
\newcommand{\ie}{{\emph{i.e.}}}
\newcommand{\eg}{{\emph{e.g.}}}
\newcommand{\etal}{{\emph{et al.}}}
\newcommand{\dist}{\mathrm{dist}}
\newcommand{\eps}{\varepsilon}
\renewcommand{\Re}{\operatorname{Re}}
\renewcommand{\Im}{\operatorname{Im}}
\newcommand{\Hsa}{H_{\rm sa}}
\definecolor{DarkGreen}{rgb}{0,0.5,0.1}
\newtheorem{Theorem}{Theorem}
\newtheorem{Proposition}[Theorem]{Proposition}
\begin{document}
\title{\bf Pseudospectra in non-Hermitian quantum mechanics}
\author{D.~Krej\v{c}i\v{r}\'{i}k$^{a}$, 
P.~Siegl$^{a,b}$, M.~Tater$^{a}$ 
and J.~Viola$^{c}$}
\date{
\small
\emph{
\begin{quote}
\begin{itemize}
\item[$a)$]
Department of Theoretical Physics, Nuclear Physics Institute ASCR, 
25068 \v{R}e\v{z}, Czech Republic.%
\\
\item[$b)$]
Mathematical Institute, University of Bern,
Alpeneggstrasse 22, 3012 Bern, Switzerland. 
\\
\item[$c)$]
D\'epartement de Math\'ematiques, 
Universit\'e de Nantes,
2 rue de la Houssini\`ere, BP 92208,
44322 Nantes Cedex 3, France. 
\end{itemize}
\end{quote}
}
\medskip
12th October 2015
}

\begin{center}
PREPRINT of \ \fbox{ J. Math. Phys. 56 (2015), 103513 }
\end{center}

\vspace{-5ex}

{\let\newpage\relax\maketitle}
%\maketitle

\begin{abstract}
\noindent
We propose giving the mathematical concept of the pseudospectrum 
a central role in quantum mechanics 
with non-Hermitian operators. 
We relate pseudospectral properties to quasi-Hermiticity,
similarity to self-adjoint operators, 
and basis properties of eigenfunctions.
The abstract results are illustrated by unexpected wild
properties of operators familiar 
from $\mathcal{PT}$-symmetric quantum mechanics.
\medskip \\
\noindent
{\bf Mathematics Subject Classification (2010)}: \\
Primary: 47A10, 81Q12; 
Secondary: 34L10, 35A27, 47B44. 
\smallskip \\
\noindent
{\bf Keywords}:
quantum mechanics with non-self-adjoint operators,
pseudospectra, semiclassical pseudomodes, 
microlocal analysis, JWKB approximation,
similarity to self-adjoint operators, quasi-Hermiticity, 
metric operator, $\PT$-symmetry, 
imaginary Airy operator, imaginary cubic oscillator, 
complex rotated and shifted harmonic oscillators,
elliptic quadratic operators, 
wild basis properties, spectral instability
\end{abstract}

%------%
% BODY %
%------%
%
%\newpage
%---------------------%
\section{Introduction}
%---------------------%
%
\begin{quote}
\emph{In the highly non-normal case, vivid though the image may be,
the location of the eigenvalues may be as fragile an indicator 
of underlying character as the hair colour of a Hollywood actor.}

\raggedleft Trefethen and Embree, \cite[p.~11]{Trefethen-Embree}
\end{quote}
It has long been known to numerical analysts that many important spectral 
properties of self-adjoint operators 
are lost when considering non-normal operators.  
In the 2005 monograph \cite{Trefethen-Embree},
Trefethen and Embree 
discuss decades of ongoing research and advocate 
the use of pseudospectra instead of spectra 
when studying a matrix or operator which is 
non-self-adjoint or non-normal. 

In this paper we stress the importance of pseudospectra, a set which measures the instability of the spectrum of a non-normal operator,
in the so-called ``non-Hermitian quantum mechanics''. 
Contrary to a common misconception, we demonstrate that the spectrum alone
contains by far insufficient information to draw any
quantum-mechanically relevant conclusions 
for non-Hermitian operators.
In particular, the fact that the spectrum of an operator is real, 
or even the presence of a reduction to a self-adjoint
operator using an unbounded similarity transformation,
is not sufficient to guarantee that an operator 
with non-trivial pseudospectrum has a meaning in the context of  
conventional quantum mechanics.

By non-Hermitian quantum mechanics we mean the attempts
of Bender \etal\ \cite{Bender-Boettcher_1998,BBJ}
to extend quantum mechanics to include observables represented 
by $\mathcal{PT}$-symmetric non-Hermitian operators, see \cite{nsa-rem}, with real spectra.  
Here, $\mathcal{PT}$-symmetry refers to the invariance of 
an operator~$H$ on the Hilbert space $L^2(\Real^d)$
with respect to a simultaneous parity and time reversal, \ie,
\begin{equation}\label{symmetry}
  [H,\mathcal{PT}]=0 
  \,,
\end{equation}
where $(\mathcal{P}\psi)(x):=\psi(-x)$ 
and $(\mathcal{T}\psi)(x):=\overline{\psi(x)}$.
It has been argued that if the operator possesses,
in addition to the obvious $\mathcal{PT}$-symmetry,
a special hidden symmetry -- a so-called $\mathcal{C}$-symmetry --
then indeed the spectrum of~$H$ is real. 
It has furthermore been suggested that a consistent quantum
theory can be built by changing the inner product 
into one for which the operator~$H$ is Hermitian 
and the time evolution is unitary.
The procedure can be understood by the concept of pseudo-Hermiticity
as developed by Mostafazadeh \cite{Ali1,Ali2,Ali3}: 
a $\mathcal{CPT}$-symmetric operator can be transformed into a self-adjoint operator 
$\Hsa=\Hsa^*$ via a similarity transformation~$\Omega$, \ie,
\begin{equation}\label{similar}
  \Hsa = \Omega H \Omega^{-1}
  \,.
\end{equation}
The latter is the basis for a possible 
quantum-mechanical interpretation of~$H$ 
as an equivalent representation 
of a physical observable which would be conventionally
represented by the self-adjoint operator~$\Hsa$.

If~$\Omega$ in~\eqref{similar} is bounded and boundedly invertible,
\cf~\cite{bdd-rem},
although not necessarily unitary,
then indeed the spectra of~$\Hsa$ and~$H$ 
coincide and the pseudospectra  
are related by simple approximate inclusions; see \eqref{condition.number}.
Moreover, if the spectrum is discrete, 
the eigenfunctions of~$H$ and~$\Hsa$ 
share essential basis properties.
In this case, the $\mathcal{PT}$-symmetry can be understood
through an older notion of quasi-Hermiticity~\cite{Dieudonne_1961,GHS}
and the quantum-mechanical description of~$H$ via~$\Hsa$ is consistent:
$H$ and $\Hsa$ represent the same physical system.

However, problems arise if~$\Omega$ or~$\Omega^{-1}$ 
entering the fundamental relation \eqref{similar} are allowed to be unbounded.
We list several potential pitfalls below. 
We do not claim that there are no physical problems 
where an unbounded similarity transformation could be useful (in fact, there are!), 
nevertheless, if any of the pathological situations described below occur, 
$\Hsa$ and~$H$~cannot be viewed as equivalent 
representatives of the same physical observable in quantum mechanics.

\begin{enumerate}
\item
It is not always easy to give a good meaning 
to the operator identity~\eqref{similar} 
when taking into account the respective domains. 
The relation \eqref{similar} may hold on 
some particular functions, 
\eg~$C_0^\infty(\Real^d)$, 
but the operator identity may not be satisfied. 
\item
Spectra are not preserved by unbounded transformations.
It may well happen that the spectrum of~$H$ is purely discrete,
while~$\Hsa$ has no eigenvalues or some continuous spectrum, 
and vice versa. 
\item
Unbounded transformations may turn a nice (even orthonormal) eigenbasis
to a set of functions that cannot form any kind
of reasonable basis.
\item
Spectra of non-Hermitian operators 
are known to lie deep inside very large pseudospectra, 
while the pseudospectrum of a self-adjoint operator 
is just a tubular neighbourhood of its spectrum.   
Consequently, the spectrum of $\Hsa$~is stable under
small perturbations, while an arbitrarily small perturbation of~$H$
can create eigenvalues very far from the spectrum of~$H$.
\item 
Even if the spectrum of~$H$ were purely real, 
$-iH$ does not need to be the generator of a bounded semigroup.
In fact, a wild behaviour of the pseudospectrum of~$H$ 
prevents to associate a bounded time-evolution to~$H$ 
via the Schr\"odinger equation (\cf~\cite[Thm.~8.2.1]{Davies_2007}).  
\end{enumerate}

The objective of this paper is to demonstrate 
by a careful mathematical analysis that
these commonly overlooked problems do appear in more or less 
famous models of non-Hermitian quantum mechanics 
and to emphasize that the concept of pseudospectra 
gives important information missing in prior works. 
In conclusion, the present study necessarily
casts doubt on certain commonly accepted conclusions
based on formal manipulations in the physical literature, 
\cf~the reviews \cite{Bender_2007,Ali-review},
particularly on the physical relevance of $\mathcal{PT}$-symmetry
in the quantum-mechanical context.
We notably refer to the concrete examples 
presented in Section~\ref{Sec.examples} for specific controversies.
We also remark that the unbounded time-evolution has been recently 
studied more explicitly for some of the models presented below,
namely in~\cite{Graefe-2015-48} for the gauged oscillator
(see Section~\ref{subsec.Swanson})
and in~\cite{Aleman-2014a,Aleman-2014b} for quadratic operators 
(see Section~\ref{subsec.el.q.op}).

Our approach relies on standard tools of modern functional analysis, nonetheless, several innovations appear.
Particularly, we construct new pseudomodes for the (non-semiclassical) shifted harmonic oscillator, where the scaling leads to the fractional power of $h$, \cf~Theorem \ref{Thm.Shifted}. 
We also
give a new and very short proof of the rate of spectral projection growth for the rotated oscillator analysed previously in \cite{Davies-Kuijlaars_2004,Henry,Viola_2013}, \cf~Appendix \ref{app.Pk}. 
Last but not least, we establish an explicit unitary equivalence 
between the rotated and gauged oscillator (Swanson's model),
which has not been noticed previously. We furthermore describe how to identify an equivalent rotated oscillator for each such quadratic operator in a class identified in \cite{Pravda-Starov_2008}.

The paper is organized as follows.
In the Section~\ref{Sec.pseudo}
we summarize some basic properties of pseudospectra.
Since~$\mathcal{PT}$-symmetry is a special example of an antilinear symmetry,
we briefly discuss
pseudospectral properties of operators 
commuting with antiunitary operators in Section~\ref{Sec.anti}.
Our main mathematical tool for proving the existence of large pseudospectra
is stated as Theorem~\ref{Thm.DSZ} of Section~\ref{Sec.micro};
it is based on a construction of pseudomodes of semiclassical operators
adapted from~\cite{Dencker-Sjostrand-Zworski_2004}.
In Section~\ref{Sec.metric} we relate the pseudospectrum
to the concept of quasi-Hermiticity  
and similarity to a self-adjoint operator.
A relationship between basis properties of eigenfunctions 
is pointed out in Section~\ref{Sec.basis}.
Finally, in Section~\ref{Sec.examples} (which occupies the bulk of the paper), we present a number
of non-self-adjoint ordinary differential operators 
exhibiting striking spectral and pseudospectral properties;
they will serve as an illustration of the abstract 
operator-theoretic methods summarized in this paper.  Certain technical proofs are reserved for the Appendix.

%----------------------%
\section{Pseudospectra}\label{Sec.pseudo}
%----------------------%
%
The notion of pseudospectra arose as a result of the realization 
that several pathological properties of highly non-Hermitian 
operators were closely related. 
We refer to by now classical monographs 
by Trefethen and Embree~\cite{Trefethen-Embree}
and Davies~\cite{Davies_2007} for more information
on the subject and many references.

Let~$H$ be a closed densely defined operator (bounded or unbounded)
on a complex Hilbert space~$\mathcal{H}$.
The \emph{spectrum} of~$H$, denoted by~$\sigma(H)$,  
consists of those complex points~$z$
for which the resolvent $(H-z)^{-1}$ does not exist
as a bounded operator on~$\mathcal{H}$.
If~$\H$ were finite-dimensional, then the spectrum of~$H$
would be exhausted by eigenvalues
(\ie~those complex numbers~$\lambda$  
for which $H-\lambda$ is not injective).
In general, however, there are additional parts of the spectrum composed  
of those~$\lambda$ which are not eigenvalues 
but for which $H-\lambda:\Dom(H)\to\H$ is not bijective:
depending on whether the range $\Ran(H-\lambda)$ 
is dense in~$\H$ or not, one speaks about the \emph{continuous}
or \emph{residual} spectrum, respectively.

The complement of~$\sigma(H)$ in~$\Com$
is called the \emph{resolvent set} of~$H$.
The \emph{numerical range}~$\Num(H)$ of~$H$ is defined by
the set of all complex numbers $(\psi,H\psi)$
where~$\psi$ ranges over all~$\psi$ from
the operator domain $\Dom(H)$ with $\|\psi\|=1$.  

Given a positive number~$\eps$,
we define the \emph{$\varepsilon$-pseudospectrum} (or simply \emph{pseudospectrum}) of~$H$ 
as
\begin{equation}\label{pseudo}
  \sigma_\eps(H) := \sigma(H) \cup
  \big\{
  z \in \Com \ \big| \ 
  \|(H-z)^{-1}\| > \eps^{-1} 
  \big\}
  \,;
\end{equation}
sometimes the convention that 
$\|(H-z)^{-1}\|=\infty$ for $z \in \sigma(H)$ is used.
(We refer to the interesting essays 
\cite{Shargorodsky_2009,Shargorodsky_2010} by Shargorodsky
on the distinction between the definition of pseudospectra 
with strict and non-strict inequalities.) 
Here we summarize some basic well-known properties of pseudospectra.

\paragraph{$\bullet$ Topology.}
For every $\eps>0$, 
$\sigma_\eps(H)$ is a non-empty open subset of~$\Com$
and any bounded connected component of $\sigma_\eps(H)$
has a non-empty intersection with $\sigma(H)$.
(If the spectrum of~$H$ is empty, then $\sigma_\eps(H)$
is unbounded for every $\eps>0$.) 
These facts follow from the subharmonicity of $\|(H-z)^{-1}\|$
as a function of $z \not\in \sigma(H)$ to $(0,\infty)$.

\paragraph{$\bullet$ Relation to spectra.}
The pseudospectrum always contains an $\eps$-neighbour\-hood 
of the spectrum, and if $\Com\setminus\overline{\Num(H)}$ 
is connected and has a non-empty intersection 
with the resolvent set of~$H$,
the pseudospectrum is in turn contained in an $\eps$-neighbourhood 
of the numerical range:
\begin{equation}\label{robust}
  \big\{z\in\Com\ \big| \ \dist\big(z,\sigma(H)\big) < \eps \big\}
  \subseteq \sigma_\eps(H) \subseteq 
  \big\{z\in\Com\ \big| \ \dist\big(z,\overline{\Num(H)}\big) < \eps \big\}
  \,.
\end{equation}
The first inclusion follows from the bound
$\|(H-z)^{-1}\| \geq \dist\big(z,\sigma(H)\big)^{-1}$, which is
valid for any operator.
Since equality holds there if~$H$ is self-adjoint 
(or more generally normal, \ie\ $H^*H=HH^*$), 
it follows that 
the pseudospectra for such operators give no additional information not already given by the spectrum.
On the other hand, if~$H$ is ``highly non-self-adjoint'',
the pseudospectrum $\sigma_\eps(H)$ is typically ``much larger''
than the $\eps$-neighbourhood of the spectrum. 
In any case, the second inclusion shows 
that the pseudospectra are well behaved outside the numerical range.

\paragraph{$\bullet$ Spectral instability.}
There is an important property
(known sometimes as the Roch-Silberman theorem \cite{Roch-Silberman_1996},
although the result is already mentioned in \cite{Reichel-1992-162})
relating the pseudospectra to the stability
of the spectrum under small perturbations:
\begin{equation}\label{RS-thm}
  \sigma_\eps(H) = 
  \bigcup_{\|V\| < \eps} \sigma(H+V)
  \,.
\end{equation}
The importance of this property is summarized by the following statement 
from~\cite{Davies_2002}:
\begin{quote}\emph{Very large pseudospectra are always associated with eigenvalues 
which are very unstable with respect to perturbations. 
This is clearly of great importance to numerical analysts: 
if a spectral problem is unstable enough, no numerical procedure can
enable one to find the eigenvalues, 
whose significance therefore becomes a moot point.}
\end{quote}
The relation~\eqref{RS-thm} likely lends the strongest support
for the usage of pseudospectra instead of spectra
in the case of non-Hermitian operators.

\paragraph{$\bullet$ Pseudomodes.}
A complex number~$z$ belongs to $\sigma_\eps(H)$ 
if, and only if, $z\in\sigma(H)$ or~$z$ is 
a \emph{pseudoeigenvalue} (or \emph{approximate eigenvalue}), \ie,  
\begin{equation}\label{pseudoEV}
  \|(H-z)\psi\| < \eps \|\psi\|
  \quad \mbox{for some} \quad \psi \in \Dom(H)
  \,.
\end{equation}
Any~$\psi$ satisfying~\eqref{pseudoEV} 
is called a \emph{pseudoeigenvector}
(or \emph{pseudoeigenfunction} or \emph{pseudomode}).
Again, for operators~$H$ which are far from self-adjoint, 
pseudoeigenvalues may not be close to the spectrum of~$H$.
This is particularly striking if we realize that
these pseudoeigenvalues can be turned into true eigenvalues
by a very small perturbation, \cf~\eqref{RS-thm}. 
What is more, we can often construct very nice 
(\eg~smooth and with compact support) pseudoeigenfunctions; 
see Section~\ref{Sec.micro} and the references therein.

\paragraph{$\bullet$ Adjoints.}
Using the identity 
$
  \|(H^*-\bar{z})^{-1}\| = \|(H-z)^{-1}\|
$,
it is easy to see that the pseudospectrum of~$H^*$
is given by the mirror image of $\sigma_\eps(H)$
with respect to the real axis, \ie,
\begin{equation}\label{adjoint}
  \lambda \in \sigma_\eps(H)
  \ \Longleftrightarrow \
  \overline{\lambda} \in \sigma_\eps(H^*)
    \,.
\end{equation}

\paragraph{$\bullet$ Similarity.}
Let the similarity relation~\eqref{similar} 
hold with a bounded and boundedly invertible 
operator~$\Omega$. Then the operators~$H$ and~$\Hsa$
have the same spectra, but their pseudospectra
may be very different, unless the \emph{condition number}
$\kappa:=\|\Omega\| \|\Omega^{-1}\| \geq 1$ 
is fairly close to one.  
Indeed, we have
\begin{equation}\label{condition.number}
  \sigma_{\eps/\kappa}(H) 
  \subseteq \sigma_{\eps}(\Hsa) \subseteq
  \sigma_{\eps\kappa}(H) 
  \,.
\end{equation}
% 

%----------------------------%
\section{Antilinear symmetry}\label{Sec.anti}
%----------------------------%
%
We understand $\mathcal{PT}$-symmetry as a special example
of invariance of a closed densely defined operator~$H$ 
with respect to an \emph{antiunitary} transformation~$\mathcal{S}$, \ie,
\begin{equation}\label{symmetry.bis}
  [H,\mathcal{S}] = 0
  \,.
\end{equation}
Recall that $\mathcal{S}$ is an antiunitary
transformation if
$\mathcal{S}$~is a bijective antilinear operator on~$\mathcal{H}$
satisfying $(\mathcal{S}\phi,\mathcal{S}\psi)=(\psi,\phi)$ 
for every $\phi,\psi \in \mathcal{H}$. 
As usual for the commutativity of an unbounded operator
and a bounded operator \cite[Sec.~III.5.6]{Kato}, 
we understand~\eqref{symmetry.bis}
by the operator relation $\mathcal{S}H \subseteq H\mathcal{S}$.
In other words, whenever $\psi\in\Dom(H)$, 
the image $\mathcal{S}\psi$~also belongs to~$\Dom(H)$ 
and $\mathcal{S}H\psi = H\mathcal{S}\psi$.

\paragraph{$\bullet$ Symmetry.}
It is well known that the spectra of $\mathcal{PT}$-symmetric
operators on $L^2(\Real^d)$ are symmetric with respect to the real axis.
In our general situation~\eqref{symmetry.bis}, 
this follows from the identity
\begin{equation}\label{antilinear}
  (H-z)^{-1} = \mathcal{S}^{-1}(H-\bar{z})^{-1}  \mathcal{S},
\end{equation}
which is valid for any~$z$ in the resolvent set of~$H$;
the symmetry can then be deduced from~\eqref{symmetry.bis}. 
Furthermore, \eqref{antilinear}~yields the same relation
for the pseudospectra of $\mathcal{S}$-symmetric operators~$H$:
\begin{equation}\label{antilinear.pseudo}
  \lambda \in \sigma_\eps(H)
  \ \Longleftrightarrow \
  \overline{\lambda} \in \sigma_\eps(H)
  \,.
\end{equation}
This identity holds trivially when the resolvent set 
of~$H$ is empty.

\paragraph{$\bullet$ $\mathcal{J}$-self-adjointness.}
An alternative framework for $\mathcal{PT}$-symmetric operators
was suggested in~\cite{BK} 
in terms of $\mathcal{J}$-self-adjoint operators.
Here~$\mathcal{J}$ is a \emph{conjugation},
\ie~an antiunitary involution,
and~$H$ is said \emph{$\mathcal{J}$-self-adjoint} if 
$H^*=\mathcal{J}H\mathcal{J}$,
\cf~\cite[Sec.~III.5]{Edmunds-Evans}.
(The present $\mathcal{J}$-self-adjointness 
should not be confused with
a terminologically similar but different concept 
in Krein spaces~\cite{Albeverio-Guenther-Kuzhel_2009,Hassi-Kuzhel_2013}.)
An example of such a~$\mathcal{J}$ on $\sii(\Real^d)$ is $\T$, \ie~complex conjugation.
A particularly useful property of $\mathcal{J}$-self-adjoint
operators is that their residual spectrum is always empty, \cf~\cite{BK}.

%----------------------------%
\section{Microlocal analysis}\label{Sec.micro}
%----------------------------%
%
It was Davies~\cite{Davies_1999-NSA} who first realized 
that and how semiclassical methods can be applied to the study 
of pseudospectra of non-Hermitian Schr\"odinger operators.
Shortly thereafter, Zworski~\cite{Zworski_2001} 
pointed out that Davies' discoveries
could be related to long-established results 
in the microlocal theory of partial differential operators
due to H\"ormander and others.
We refer to a paper of Dencker, Sj\"ostrand and Zworski
\cite{Dencker-Sjostrand-Zworski_2004} for an important
development of the idea in the context of pseudodifferential operators.
We state here a version of their general result for the special case 
of differential operators with analytic coefficients in one dimension, in a formulation given in \cite[Thm.~11.1]{Trefethen-Embree}.

We recall some standard notions first. 
Let $h>0$ be a (small) parameter. 
Given continuous functions 
$a_j:\Real\to\Com$, with $j=0,\dots,n$, 
we define a \emph{symbol}
\begin{equation}\label{sym.def}
f(x,\xi) := \sum_{j=0}^n a_j(x) (-i\xi)^j
\,, \qquad
(x,\xi) \in \Real^2
\,,
\end{equation}
and the associated
\emph{semiclassical differential operator}
\begin{equation}\label{Hh.def}
	H_h := \sum_{j=0}^n a_j(x) \, 
	h^j \, \frac{\dd^j}{\dd x^j} 
	\,, 
	\qquad \Dom(H_h):=C_0^{\infty}(\Real)
	\,.
\end{equation}
The \emph{Poisson bracket}
$\{\cdot,\cdot\}$ is defined as
\begin{equation}\label{poisson}
	\{u,v\} := 
	\frac{\partial u}{\partial \xi} \frac{\partial v}{\partial x}
	- \frac{\partial u}{\partial x} \frac{\partial v}{\partial \xi}
\end{equation}
and, for $u=f$, $v=\bar{f}$, it simplifies to
$$
\{f,\bar{f}\} = 2 i \left(
\frac{\partial \Im f}{\partial \xi} \frac{\partial \Re f}{\partial x}
- \frac{\partial \Re f}{\partial \xi} \frac{\partial \Im f}{\partial x}
\right).
$$
The closure of the set
\begin{equation}\label{semiclass.pseudospec}
	\Lambda :=
	\Big\{
	f(x,\xi) \, : \,
	(x,\xi) \in \Real^{2}, \ \frac{1}{2i}\{f,\bar{f}\}(x,\xi) > 0 
	\Big\}
\end{equation}
is referred to as 
the \emph{semiclassical pseudospectrum}\index{semiclassical pseudospectrum} 
of $H_h$, \cf~\cite{Dencker-Sjostrand-Zworski_2004}.
In the special case of~$H_h$ 
being a Schr\"odinger operator with an analytic potential, 
the condition $\frac{1}{2i}\{f,\bar{f}\}(x,\xi) > 0$ 
reduces to $\Im V'(x) \neq 0$ and $\xi\not = 0$, because the sign of $\xi$ can be chosen freely, and it is also equivalent to the \emph{twist condition} of~\cite[Sec.~III.11]{Trefethen-Embree}.
The nonvanishing of $\{f,\bar{f}\}$ 
is a classical analogue of the operator~$H_h$ 
not being normal~\cite{Dencker-Sjostrand-Zworski_2004}.

\begin{Theorem}[Semiclassical pseudomodes.]
\label{Thm.DSZ}
Let the functions $a_j$, $j=0,\dots,n$, be analytic and let $H_h$ be the semiclassical differential operator \eqref{Hh.def}. 
Then for any~$z \in \Lambda$,  
there exist $C = C(z) > 1$, $h_0 =h_0(z) > 0$ 
and an $h$-dependent family of $C_0^\infty(\Real)$ 
functions $\{\psi_h \}_{0 < h \leq h_0}$
with the property that, for all $0 < h \leq h_0$,
$$
\|(H_h-z)\psi_h \| < C^{-1/h} \, \|\psi_h\|.
$$
\end{Theorem}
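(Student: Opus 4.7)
I would use the complex JWKB / geometric-optics construction that goes back to Davies and H\"ormander and was carried out for semiclassical pseudospectra in \cite{Dencker-Sjostrand-Zworski_2004}; the analyticity of the coefficients $a_j$ is what lets one push the error from polynomial in $h$ to exponentially small. Given $z \in \Lambda$, I would fix $(x_0,\xi_0)\in\Real^2$ with $f(x_0,\xi_0)=z$ and $\frac{1}{2i}\{f,\bar f\}(x_0,\xi_0)>0$, and seek the pseudomode in the WKB form
\begin{equation*}
  \psi_h(x) \;=\; \chi(x)\, e^{i\varphi(x)/h} \sum_{k=0}^{N} h^k\, a_k(x),
\end{equation*}
where $\varphi$ is a complex-valued phase, $\chi\in C_0^\infty(\Real)$ is a cutoff equal to $1$ near $x_0$, and $N=N(h)$ is a truncation index to be optimized at the end.

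The phase is to be obtained by solving the eikonal equation $f(x,-\varphi'(x))=z$ with $-\varphi'(x_0)=\xi_0$. The hypothesis forces $\partial_\xi f(x_0,\xi_0)\neq 0$, so the analytic implicit function theorem provides an analytic local solution; a short computation then identifies $\Im\varphi''(x_0)$ as a positive multiple of $\frac{1}{2i}\{f,\bar f\}(x_0,\xi_0)$ once the branch of the implicit function and the sign of the exponent in the ansatz are chosen consistently. This yields the crucial lower bound $\Im\varphi(x)\ge c(x-x_0)^2$ on a fixed real neighborhood of $x_0$, which is responsible for all of the exponential smallness below.

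Next I would insert the ansatz into $H_h-z$ and collect powers of $h$. The order-$h^0$ term vanishes by the eikonal equation, and each subsequent order produces a first-order linear transport ODE $L_1 a_k = R_k(a_0,\dots,a_{k-1})$, where $L_1 = \partial_\xi f(x,-\varphi'(x))\, \dd/\dd x + \cdots$ is non-degenerate near $x_0$. Imposing $a_0(x_0)=1$ and $a_k(x_0)=0$ for $k\ge 1$, each $a_k$ is analytic on a fixed complex disk around $x_0$, and Cauchy estimates propagated through the recursion yield factorial bounds of the form $\sup|a_k| \le A_0 A^k k!$. Truncating at $N = \lfloor \delta/h \rfloor$ with $\delta < e/A$, Stirling turns the formal residual into $O\big(h^{N+1} A^{N+1}(N+1)!\big) = O(e^{-c/h})$, and the cutoff commutator $[H_h,\chi]$ is supported where $|x-x_0| \ge \delta_0 > 0$, so the Gaussian bound on $|e^{i\varphi/h}|$ forces its contribution to be $O(e^{-c'/h})$ as well. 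Since $\|\chi\psi_h\| \sim h^{1/4}$ by Gaussian integration, the ratio $\|(H_h - z)\psi_h\| / \|\psi_h\|$ is $O(e^{-c''/h})$, yielding the constant $C(z)>1$ claimed in the theorem.

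The main obstacle will be the quantitative propagation of analyticity in the last step: keeping uniform control on the analyticity radii, on the operator norms of $L_1^{-1}$, and on the size of the inhomogeneities $R_k$ as $k$ grows, so that the recurrence $L_1 a_k = R_k$ produces the factorial bound $A_0 A^k k!$ with $k$-independent constants. This is a Gevrey/Borel-summation-type argument; its pseudodifferential version is carried out in detail in \cite{Dencker-Sjostrand-Zworski_2004}, and the one-dimensional analytic setting of the present theorem reduces it to rather concrete manipulations on a single complex disk.
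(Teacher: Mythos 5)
Your proposal follows essentially the same route as the paper's Appendix A.1: a complex WKB ansatz $\chi\, e^{i\varphi/h}\sum h^k a_k$, eikonal/transport hierarchy with the branch of the phase fixed by the bracket condition so that $\Im\varphi''(x_0)>0$, factorial (Gevrey) bounds on the $a_k$ obtained by iterating Cauchy estimates on shrinking disks, an optimal truncation $N\sim 1/h$, and cutoff-commutator errors controlled by the Gaussian decay of $|e^{i\varphi/h}|$, with $\|\psi_h\|\gtrsim h^{1/4}$ by Gaussian integration. The only notable difference is cosmetic: the paper writes out the argument only for the Schr\"odinger case (where $f$ is even in $\xi$, so the sign in the eikonal is immaterial and $\varphi'=\pm\sqrt{z-V}$ explicitly), whereas you phrase the eikonal step via $\partial_\xi f\neq 0$ and the analytic implicit function theorem, which is the correct formulation for a general order-$n$ analytic symbol.
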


Such of family of functions is called a pseudoeigenfunction 
(or \emph{pseudomode}) 
for the operator~$H_h$ 
corresponding to \emph{pseudoeigenvalue}
(or \emph{approximate eigenvalue})~$z$. 

In Appendix~\ref{App.proofs}, we include a proof 
of the theorem
in the special case of Schr\"odinger operators.

The theorem can be generalized significantly beyond 
the restrictive assumption that the coefficients are globally analytic.  As stated in \cite[Thm.~11.1]{Trefethen-Embree}, one only needs that the $a_j$ are analytic in a neighborhood of some $x_0 \in \Real$ corresponding to an $(x_0, \xi_0) \in \Real^2$ putting $z \in \Lambda$. Furthermore, \cite{Dencker-Sjostrand-Zworski_2004} 
shows the existence of pseudomodes for pseudodifferential operators 
(which include differential operators) 
whose symbols are only assumed to be smooth (and bounded).  
The price of abandoning the assumption of analytic coefficients 
is a slower rate of growth. 
In the smooth case, instead of an upper bound of $C^{-1/h} \|\psi_h\|$ 
one has an upper bound for each $N \in \Nat$ and constant $C(N) > 0$ 
depending on $N$:
\[
	\|(H_h - z)\psi_h\| < \frac{h^N}{C(N)} \, \|\psi_h\| \,,
\]
for all $0 < h \leq h_0$.
For the examples to follow, it will be sufficient to consider the analytic case as written in Theorem~\ref{Thm.DSZ}.

Although Theorem~\ref{Thm.DSZ} is stated for semiclassical operators, scaling techniques allow its application to non-semiclassical operators where the spectral parameter tends to infinity.  This is based on the principle that the semiclassical limit is equivalent to the high-energy limit after a change of variables; this principle is made concrete in many of the examples below. For further details, the reader could consult \cite{Zworski_2012}.

%------------------------%
\section{Metric operators}\label{Sec.metric}
%------------------------%
%
The similarity relation~\eqref{similar} is closely related 
to the \emph{quasi-Hermiticity} of~$H$:
\begin{equation}\label{quasi}
  H^*\Theta = \Theta H 
  \,,
\end{equation}
where~$\Theta$ is a positive operator called 
a \emph{metric} \cite{Dieudonne_1961,GHS}.
The terminology comes from the observation that~$H$
is formally self-adjoint with respect to the modified
inner product $\langle\cdot,\Theta\cdot\rangle$.
More precisely, we have
\begin{Proposition}\label{Prop.equivalence}
The operator $H$~is similar to a self-adjoint operator via a bounded
and boundedly invertible positive transformation (\ie~\eqref{similar} holds)
if, and only if, $H$~is quasi-Hermitian with a positive,
bounded, and boundedly invertible metric (\ie~\eqref{quasi} holds).  
\end{Proposition}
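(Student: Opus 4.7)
The plan is to link the two conditions through the positive square root, setting $\Theta = \Omega^2$ (equivalently $\Omega = \Theta^{1/2}$). Continuous functional calculus for positive bounded self-adjoint operators guarantees that this correspondence is a bijection between positive, bounded, boundedly invertible transformations on $\mathcal{H}$, and thereby reduces the proposition to routine manipulations with a single operator equation.

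For the forward implication, I would assume \eqref{similar} with $\Omega$ positive, bounded, and boundedly invertible, and set $\Theta := \Omega^2$. Since $\Omega$ is positive it is self-adjoint, and $\Theta$ automatically inherits positivity, boundedness, and bounded invertibility. I would rewrite \eqref{similar} in the form $\Hsa \Omega = \Omega H$ on $\Dom(H)$; the boundedness of $\Omega$ permits taking Hilbert-space adjoints factor by factor, producing $\Omega \Hsa = H^* \Omega$ via $\Omega^*=\Omega$ and $\Hsa^*=\Hsa$. Composing these two relations yields $\Omega^2 H = \Omega(\Hsa \Omega) = (\Omega \Hsa)\Omega = H^* \Omega^2$, which is \eqref{quasi} for $\Theta = \Omega^2$.

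For the reverse implication, I would assume \eqref{quasi} and define $\Omega := \Theta^{1/2}$ by functional calculus on the bounded positive self-adjoint $\Theta$, so that $\Omega$ is itself positive, bounded, and boundedly invertible. Set $\Hsa := \Omega H \Omega^{-1}$ on the natural domain $\Omega \Dom(H)$. Because $\Omega$ and $\Omega^{-1}$ are bounded and self-adjoint, one has $\Hsa^* = \Omega^{-1} H^* \Omega$ as an operator identity, and the desired self-adjointness $\Hsa = \Hsa^*$ is equivalent, after conjugation by $\Omega$, to $\Omega^2 H = H^* \Omega^2$, which is precisely the assumed \eqref{quasi}.

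The principal delicacy, and what I expect to be the real work, is interpreting both \eqref{similar} and \eqref{quasi} as genuine operator identities with matching natural domains rather than as formal relations on some dense subspace. In the forward direction the required domain matching $\Omega^2 \Dom(H) = \Dom(H^*)$ follows from the identification $H^* = \Omega \Hsa \Omega^{-1}$, obtained by taking adjoints in the bounded similarity $H = \Omega^{-1}\Hsa\Omega$. In the reverse direction, the operator identity \eqref{quasi} itself encodes $\Dom(\Theta H) = \Dom(H^* \Theta)$, and since $\Theta$ and $\Theta^{-1}$ are bounded this reduces to $\Theta \Dom(H) = \Dom(H^*)$, which is exactly what is needed to identify the domains of $\Hsa$ and $\Hsa^*$ and to promote symmetry to self-adjointness.
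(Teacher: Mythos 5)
Your argument is correct and follows the same route as the paper: both link the similarity and the quasi-Hermiticity through the positive square root, taking $\Theta=\Omega^2$ (which is the paper's $\Theta=\Omega^*\Omega$ since $\Omega$ is positive, hence self-adjoint) in one direction and $\Omega=\Theta^{1/2}$ in the other. Your treatment is simply more explicit about the domain matching, which the paper summarizes as ``it is easy to check.''
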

\begin{proof}
If~$H$ satisfies~\eqref{quasi} with a bounded
and boundedly invertible $\Theta>0$, then the similarity
relation~\eqref{similar} to a self-adjoint operator~$h$
holds with any~$\Omega$ satisfying the decomposition $\Theta=\Omega^*\Omega$.  
Conversely, if~\eqref{similar} holds with a bounded
and boundedly invertible~$\Omega$, then it is easy to check
that~\eqref{quasi} holds with $\Theta = \Omega^*\Omega>0$. 
\end{proof}
As emphasized in~\cite{SK},
fundamental problems arise if one starts to relax the conditions 
on boundedness or bounded invertibility of the transformations. 

\paragraph{$\bullet$ Trivial pseudospectra.}
We say that the pseudospectrum of~$H$ is \emph{trivial}
if there exists a fixed constant~$C$ such that, for all $\eps > 0$,
$$
  \sigma_\eps(H)
  \subseteq
  \big\{z\in\Com\ \big| \ \dist\big(z,\sigma(H)\big) < C\eps \big\}
  \,.
$$
That is, the pseudospectrum of~$H$ 
is contained in a tubular neighbourhood
of the spectrum of~$H$ (although of possibly larger radius than~$\eps$).
Recalling the text below~\eqref{robust}, 
the pseudospectra of self-adjoint and normal operators are trivial.
\begin{Proposition}\label{Prop.test}
Let~$H$ be quasi-Hermitian~\eqref{quasi} with a positive,
bounded and boundedly invertible metric. 
Then the pseudospectrum of~$H$ is trivial.
\end{Proposition}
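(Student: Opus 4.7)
The plan is to reduce the quasi-Hermiticity hypothesis to similarity via Proposition~\ref{Prop.equivalence}, and then to exploit the fact that the pseudospectrum of a self-adjoint operator coincides with a tubular neighbourhood of its spectrum, transported through a bounded similarity at the cost of the condition number.

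First, I would invoke Proposition~\ref{Prop.equivalence}: the quasi-Hermiticity~\eqref{quasi} with a positive, bounded and boundedly invertible metric~$\Theta$ yields a bounded and boundedly invertible~$\Omega$ (for instance the positive square root $\Omega := \Theta^{1/2}$, so that $\Theta = \Omega^*\Omega$) and a self-adjoint operator~$\Hsa$ for which the similarity relation~\eqref{similar} holds. In particular $\sigma(H) = \sigma(\Hsa)$.

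Next, I would apply the two-sided inclusion~\eqref{condition.number} with $\kappa := \|\Omega\|\|\Omega^{-1}\| \geq 1$: the left inclusion gives
$$
  \sigma_{\eps/\kappa}(H) \subseteq \sigma_{\eps}(\Hsa)
$$
for every $\eps > 0$. Since~$\Hsa$ is self-adjoint (hence normal), the resolvent norm equals the reciprocal of the distance to the spectrum, so the second inclusion in~\eqref{robust} combined with the first gives equality
$$
  \sigma_{\eps}(\Hsa) = \big\{ z \in \Com \ \big| \ \dist\big(z,\sigma(\Hsa)\big) < \eps \big\}.
$$
Substituting $\eps \mapsto \kappa\eps$ and using $\sigma(\Hsa) = \sigma(H)$, I obtain
$$
  \sigma_{\eps}(H) \subseteq \sigma_{\kappa\eps}(\Hsa) = \big\{ z \in \Com \ \big| \ \dist\big(z,\sigma(H)\big) < \kappa\eps \big\},
$$
which is precisely the definition of trivial pseudospectrum with the fixed constant $C := \kappa$, independent of~$\eps$.

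There is no real obstacle here: the argument is essentially a one-line consequence of~\eqref{condition.number} together with the normality of~$\Hsa$. The only point worth verifying carefully is that $\Omega := \Theta^{1/2}$ is indeed bounded with bounded inverse (which follows from the functional calculus applied to the bounded, strictly positive, boundedly invertible operator $\Theta$) and that $\Hsa = \Omega H \Omega^{-1}$ is genuinely self-adjoint on the transported domain $\Omega \Dom(H)$, rather than merely symmetric; this is the content of the ``only if'' direction of Proposition~\ref{Prop.equivalence}, which we are entitled to use.
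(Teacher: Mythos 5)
Your proposal is correct and follows the paper's own route exactly: it first reduces quasi-Hermiticity to similarity via Proposition~\ref{Prop.equivalence}, then applies the condition-number inclusion~\eqref{condition.number} together with the fact that the pseudospectrum of a self-adjoint operator is the $\eps$-tubular neighbourhood of its spectrum, yielding triviality with constant $C=\kappa$. You have simply spelled out the short chain of implications that the paper compresses into a one-line reference; one small side remark is that the equality $\sigma_\eps(\Hsa)=\{z : \dist(z,\sigma(\Hsa))<\eps\}$ comes directly from the identity $\|(\Hsa-z)^{-1}\|=\dist(z,\sigma(\Hsa))^{-1}$ for normal operators rather than from the numerical-range inclusion in~\eqref{robust}, which would only give containment in a neighbourhood of the convex hull of the spectrum.
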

\begin{proof}
It is enough to recall Proposition~\ref{Prop.equivalence}
and~\eqref{condition.number}; the condition number plays
the role of the constant~$C$.
\end{proof}

Proposition~\ref{Prop.test} can be conveniently used 
in the reverse sense, where the presence of nontrivial pseudospectrum for a given operator~$H$ immediately implies that the operator cannot possess a physically relevant
(\ie~bounded and boundedly invertible) metric.

%-------------------------%
\section{Basis properties}\label{Sec.basis}
%-------------------------%
%
Let~$H$ be an operator with compact resolvent
throughout this section.
Then the spectrum of~$H$ consists entirely
of isolated eigenvalues with finite (algebraic) multiplicities. 
Below, we recall that similarity to a self-adjoint operator, or quasi-Hermiticity, is equivalent to having the set of eigenvectors form a Riesz basis.

First, we recall the definition of a basis.
We say that $\{\psi_k\}_{k=1}^\infty$ is a 
\emph{(Schauder or conditional) basis} 
if every $\psi \in \H$ has a unique expansion in the vectors $\{\psi_k\}$, \ie~
\begin{equation}\label{basis}
  \forall \psi \in \H,
  \quad
  \exists! \{\alpha_k\}_{k=1}^\infty,
  \quad
  \psi = \sum_{k=1}^\infty \alpha_k \psi_k
  \,,
\end{equation}
where the infinite sum is understood as a limit 
in the strong topology of~$\H$.

\paragraph{$\bullet$ Riesz basis.}
We say that $\{\psi_k\}_{k=1}^\infty$, 
normalized to~$1$ in~$\H$, 
forms a \emph{Riesz (or unconditional) basis} if it forms a basis and the inequality 
\begin{equation}\label{Riesz}
\forall \psi \in \H, \qquad
  C^{-1} \|\psi\|^2
  \leq \sum_{k=1}^\infty |\langle\psi_k,\psi\rangle|^2 \leq 
  C \|\psi\|^2
\end{equation}
holds with a positive constant $C$ independent of $\psi$ (see \cite[Thm.3.4.5]{Davies_2007} for equivalent formulations).
In view of the Parseval equality, for any self-adjoint
operator with a purely discrete spectrum one may choose orthonormal eigenvectors which form a Riesz basis with $C = 1$.
For non-self-adjoint operators, however, it is not even clear
that the eigenfunctions form a basis
or even a complete set in~$\mathcal{H}$.
\begin{Proposition}\label{Prop.Riesz}
Let~$H$ be an operator with compact resolvent for which $\sigma(H) \subset \Real$.
Then $H$~is quasi-Hermitian with a positive,
bounded and boundedly invertible metric (\ie~\eqref{quasi} holds)
if, and only if, the eigenfunctions of~$H$ form a Riesz basis
(\ie~\eqref{Riesz} holds).  
\end{Proposition}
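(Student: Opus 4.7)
The plan is to invoke Proposition~\ref{Prop.equivalence} to recast quasi-Hermiticity as similarity $\Hsa = \Omega H \Omega^{-1}$ with $\Hsa$ self-adjoint and $\Omega$ bounded and boundedly invertible, and then to combine this with the classical characterization (see \cite[Thm.~3.4.5]{Davies_2007}) that a normalized family is a Riesz basis if and only if it is the image of an orthonormal basis under a bounded, boundedly invertible operator. The compactness of the resolvent of $H$ is the key structural ingredient: it guarantees that both $H$ and its self-adjoint partner possess complete eigensystems, enabling one to identify the two operators from their common action on eigenvectors.

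For the forward direction, assume quasi-Hermiticity. Then $\Hsa = \Omega H \Omega^{-1}$ inherits compact resolvent and, being self-adjoint, admits an orthonormal eigenbasis $\{\phi_k\}$ with real eigenvalues $\lambda_k$. The normalized vectors $\psi_k := \Omega^{-1}\phi_k / \|\Omega^{-1}\phi_k\|$ are eigenfunctions of $H$, complete in $\H$ thanks to the boundedness of $\Omega^{-1}$, and the two-sided bound $\|\Omega\|^{-1} \leq \|\Omega^{-1}\phi_k\| \leq \|\Omega^{-1}\|$ combined with the Parseval identity $\sum_k |\langle \phi_k, (\Omega^{-1})^*\psi\rangle|^2 = \|(\Omega^{-1})^*\psi\|^2$ yields the Riesz inequality \eqref{Riesz} for $\{\psi_k\}$.

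For the reverse direction, let $\{\psi_k\}$ be a normalized Riesz basis of eigenvectors of $H$ with real eigenvalues $\lambda_k$. The cited characterization supplies a bounded boundedly invertible $\Omega$ and an orthonormal basis $\{\phi_k\}$ with $\Omega \psi_k = \phi_k$. Let $\Hsa$ denote the self-adjoint operator with eigenvalues $\lambda_k$ on the basis $\{\phi_k\}$ in its natural spectral-theoretic domain. Finite linear combinations of $\{\phi_k\}$ form a core of $\Hsa$, and on this core $\Omega H \Omega^{-1}$ agrees with $\Hsa$, so $\Hsa \subseteq \Omega H \Omega^{-1}$. To upgrade this to equality, fix any $z \notin \{\lambda_k\}$: given $\psi \in \Dom(\Omega H \Omega^{-1})$, solve $(\Hsa - z)\psi' = (\Omega H \Omega^{-1} - z)\psi$ with $\psi' \in \Dom(\Hsa)$; since $z \notin \sigma(H)$ the operator $\Omega H \Omega^{-1} - z$ is injective, forcing $\psi = \psi'$ and hence $\Dom(\Omega H \Omega^{-1}) = \Dom(\Hsa)$. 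Proposition~\ref{Prop.equivalence} then delivers the positive, bounded and boundedly invertible metric $\Theta := \Omega^*\Omega$.

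The main technical hurdle will be precisely this domain identification in the reverse direction: converting pointwise agreement on the dense span of eigenfunctions into a genuine operator equality. It rests on the compactness of the resolvent (to pin down the spectrum of both candidates and produce a useful $z$), the closedness of $H$ inherited by $\Omega H \Omega^{-1}$ via the boundedness of $\Omega^{\pm 1}$, and the fact that the span of $\{\phi_k\}$ is not merely dense in $\H$ but is a core for $\Hsa$. Once the similarity is established on the nose, the production of the metric is purely algebraic through Proposition~\ref{Prop.equivalence}.
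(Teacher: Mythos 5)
Your proof is correct, but the reverse direction is noticeably more elaborate than the paper's. The paper dispenses with the similarity transformation altogether: given the Riesz basis $\{\psi_k\}$, it forms the frame operator $L := \sum_{k=1}^\infty \psi_k\langle\psi_k,\cdot\rangle$, reads off from the Riesz inequality \eqref{Riesz} that $L$ is bounded, positive and bounded below (hence boundedly invertible), and then verifies the quasi-Hermiticity relation \eqref{quasi} directly with $\Theta := L^{-1}$ on the dense eigenbasis -- since $L^{-1}\psi_k$ is the biorthogonal vector $\phi_k$, which is an eigenvector of $H^*$ with the same real eigenvalue, the identity $H^*\Theta\psi_k = \Theta H\psi_k$ is immediate. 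You instead manufacture the bounded operator $\Omega$ with $\Omega\psi_k = \phi_k$ via the classical Riesz-basis characterization, define $\Hsa$ spectrally on the orthonormal basis $\{\phi_k\}$, carry out a genuine domain identification $\Omega H\Omega^{-1} = \Hsa$ (closedness, core argument, plus a resolvent bootstrap at a point $z \notin \sigma(H)$), and only then recover the metric $\Theta = \Omega^*\Omega$ through Proposition~\ref{Prop.equivalence}. Your route buys a cleaner conceptual picture -- it literally exhibits the self-adjoint partner and the similarity as operators -- at the cost of the delicate domain-matching step which the paper avoids entirely by testing quasi-Hermiticity on the eigenbasis. The forward directions are essentially identical, both producing the orthonormal family $\Omega\psi_k/\|\Omega\psi_k\|$ and invoking Parseval together with the two-sided bounds on $\|\Omega^{\pm 1}\|$. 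One small precision worth noting: Proposition~\ref{Prop.equivalence} as stated requires a \emph{positive} similarity $\Omega$, but its proof (and your use of it) only needs $\Omega$ bounded with bounded inverse to form $\Theta = \Omega^*\Omega > 0$, so your application is sound.
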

\begin{proof}
If~$H$ is quasi-Hermitian with bounded, boundedly invertible, and positive metric $\Theta$,
then, by Proposition~\ref{Prop.equivalence},
$H$~is similar to a self-adjoint operator~$h$  
via a bounded and boundedly invertible transformation~$\Omega$.
Consequently, $\Omega\psi_k/\|\Omega\psi_k\|$ 
form a complete orthonormal family in~$\mathcal{H}$
satisfying the Parseval inequality, from which~\eqref{Riesz} follows.
Conversely, assuming~\eqref{Riesz}, we construct a bounded
and boundedly invertible positive operator
$
  L := \sum_{k=1}^\infty \psi_k \langle\psi_k,\cdot\rangle
$.
It is easy to check that~\eqref{quasi} holds with~$\Theta=L^{-1}$.
\end{proof}

Combining Proposition~\ref{Prop.Riesz} with Proposition~\ref{Prop.test},
we see that the pseudospectrum can be employed as a useful indicator of
whether a non-self-adjoint operator possesses a Riesz basis.  

\paragraph{$\bullet$ No basis.}
Eigensystems of non-self-adjoint operators can have very wild basis properties. 
We recall that if $\{\psi_k\}_{k=1}^\infty$ is a basis, 
then there exists a sequence $\{ \phi_k\}_{k=1}^\infty$ for which the pair $\{\psi_k\}, \{\phi_k\}$ is biorthogonal, 
\ie~$\langle \phi_m,\psi_n\rangle = \delta_{m,n}$, such that 
$\alpha_k=\langle \phi_k, \psi \rangle$, \cf~\cite[Lem.~3.3.1]{Davies_2007}.
Let us denote the associated one-dimensional projections as 
\begin{equation}\label{biorthogonal.Pk}
	P_k := \psi_k\langle\phi_k,\cdot\rangle.
\end{equation}
The uniform boundedness principle is used to derive the following standard result.

\begin{Proposition}\label{Prop.basis}
If $\{\psi_k\}_{k=1}^\infty$ is a basis, then both $P_k$ and $\sum_{k=1}^N P_k$ are uniformly bounded in $\H$.
\end{Proposition}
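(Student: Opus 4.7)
Set $S_N := \sum_{k=1}^N P_k$, so that $S_N\psi = \sum_{k=1}^N \alpha_k \psi_k$, where $\{\alpha_k\}$ are the unique coefficients in the basis expansion~\eqref{basis}. By the very definition of a basis, $S_N\psi \to \psi$ strongly for every $\psi\in\H$; in particular, the convergent sequence $\{S_N\psi\}_{N=1}^\infty$ is bounded, so
\begin{equation*}
|||\psi||| := \sup_{N\in\Nat} \|S_N\psi\|
\end{equation*}
is finite for every $\psi\in\H$. Linearity of each $S_N$ (and of each $P_k = S_k - S_{k-1}$, with $S_0 := 0$) is immediate from the uniqueness in~\eqref{basis}.

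My plan is the standard equivalent-norm argument underlying the uniform boundedness principle. First I would verify that $|||\cdot|||$ is a norm on $\H$ dominating $\|\cdot\|$: indeed, $\|\psi\| = \lim_N \|S_N\psi\| \leq |||\psi|||$ by continuity of the norm. The key step is then to show that $(\H,|||\cdot|||)$ is complete. Given $\{\psi_n\}$ $|||\cdot|||$-Cauchy, the domination yields a $\|\cdot\|$-limit $\psi$, and for each fixed $N$ the estimate $\sup_N \|S_N(\psi_n-\psi_m)\| < \eps$ produces a $\|\cdot\|$-limit $\chi_N$ of $S_N\psi_n$. One then identifies $\chi_N$ with $S_N\psi$: the partial-sum structure forces $\chi_N - \chi_{N-1}$ to be a scalar multiple of $\psi_N$, and a diagonal-type estimate shows $\chi_N \to \psi$ in $\|\cdot\|$, so $\{\chi_N\}$ is a basis expansion of $\psi$; uniqueness in~\eqref{basis} then gives $\chi_N = S_N\psi$. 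Letting $m\to\infty$ in the Cauchy bound finally yields $|||\psi_n-\psi||| \to 0$. This identification of the pointwise limits with partial sums of the actual expansion of $\psi$ is the one delicate point in the whole argument.

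Once completeness is in place, the identity map $i \colon (\H,|||\cdot|||) \to (\H,\|\cdot\|)$ is a continuous bijection between Banach spaces. The open mapping theorem guarantees continuity of $i^{-1}$, yielding a constant $C>0$ with $|||\psi||| \leq C\|\psi\|$ for all $\psi\in\H$. Consequently $\|S_N\psi\| \leq C\|\psi\|$ uniformly in $N$, which is the desired uniform bound for the partial sums. The uniform bound on the individual projections then follows at once from $P_k = S_k - S_{k-1}$, giving $\|P_k\| \leq 2C$ for every $k$.
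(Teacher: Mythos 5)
Your proof is correct, but it takes a genuinely different and more elaborate route than the paper's. The paper's argument is a direct application of the uniform boundedness principle: since the biorthogonal functionals $\phi_k$ are given as elements of $\H$ (via \cite[Lem.~3.3.1]{Davies_2007}, as recalled just before \eqref{biorthogonal.Pk}), each $P_k$ and each $S_N$ is already a bounded operator; the convergence $S_N\psi\to\psi$ gives pointwise boundedness of $\{S_N\psi\}$ (and, by differencing consecutive partial sums, of $\{P_k\psi\}$), and Banach--Steinhaus immediately gives the uniform bound. Your approach instead builds the ``basis norm'' $|||\cdot|||=\sup_N\|S_N\cdot\|$, proves completeness of $(\H,|||\cdot|||)$, and invokes the open mapping theorem to conclude that the two norms are equivalent. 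Both rest on Baire category, but yours is the classical, more self-contained argument that would be needed if one did \emph{not} already know that the coefficient functionals $\langle\phi_k,\cdot\rangle$ are bounded --- in fact, it essentially re-proves the Davies lemma along the way, and the ``delicate point'' you flag (identifying the limits $\chi_N$ with $S_N\psi$ via the scalar-multiple and diagonal argument) is precisely the extra work incurred by not using that lemma. Given the paper's standing assumption that each $S_N$ is a bounded operator, your completeness step could be shortened considerably: $\psi_n\to\psi$ in $\|\cdot\|$ directly gives $S_N\psi_n\to S_N\psi$ for each $N$, so $\chi_N=S_N\psi$ without any need to argue that $\chi_N-\chi_{N-1}$ is a multiple of $\psi_N$. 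So the proposal buys generality and self-containment at the cost of length; the paper's route is the shorter one under the hypotheses actually in force.
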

\begin{proof}
The definition \eqref{basis}~implies that, for every $\psi \in \H$,
\begin{equation}
\forall \varepsilon >0, 
\ \exists N_{\psi,\eps}, 
\ \forall n,m > N_{\psi,\eps}, 
\quad
\left\|
\sum_{k=1}^n P_k \psi - \sum_{k=1}^m P_k \psi
\right\| 
< \varepsilon
\,.
\end{equation}
In particular, putting $m:=n-1$ and $\eps:=1$, 
we obtain $\|P_n \psi\| < 1$ if $n>N_{\psi,\eps}$.
Consequently, $\sup_k \|P_k\psi\| < \infty$ for every $\psi \in \H$.
Finally, the uniform boundedness principle~\cite[Thm.~III.9]{RS1}
yields $\sup_k \|P_k\| < \infty$. 
The proof of the second claim is analogous, see also \cite[Lem.~3.3.3]{Davies_2007}.
\end{proof}

For operators~$H$ with positive discrete spectrum, 
the basis property of the eigensystem of~$H$ may be excluded 
by the following corollary of \cite[Thm.~3]{Davies_2000}.
\begin{Proposition}\label{Prop.basis.D}
Let $H$ be an operator with compact resolvent, let its eigenvalues be simple and satisfy $\lambda_n \geq b n^\beta$ for some $b, \beta>0$ and all $n \in \Nat$ and let the corresponding eigenvectors form a basis.  Then there exist positive constants $k,m$ such that
\begin{equation}\label{R.Dav}
  \|(H-z)^{-1}\| \leq k \frac{(1+|z|^2)^\frac m2}{|\Im z|}
  \,, 
  \qquad z \notin \Real.
\end{equation}
\end{Proposition}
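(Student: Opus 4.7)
My plan is to combine the biorthogonal expansion of the resolvent with Abel summation and an elementary integral estimate on the spectral line. The hypothesis $\lambda_n\ge bn^\beta$ will enter only qualitatively, through $\lambda_n\to\infty$; in fact the argument yields the sharper bound $\|(H-z)^{-1}\|\le M\pi/|\Im z|$, which is \eqref{R.Dav} with $m=0$, and the stated inequality follows a fortiori. The growth hypothesis on $\lambda_n$ and the simplicity assumption only serve to guarantee the biorthogonal framework and the divergence of the eigenvalues.

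The steps would be as follows. First, Proposition~\ref{Prop.basis} furnishes a uniform bound $\|T_N\|\le M$ on the partial-sum projections $T_N:=\sum_{k=1}^{N}P_k$. Second, since $\sigma(H)\subset\Real$, every non-real $z$ lies in the resolvent set. Third, for $\psi\in\H$ with biorthogonal coefficients $\alpha_k:=\langle\phi_k,\psi\rangle$, the truncation $\phi_N:=\sum_{k=1}^{N}(\lambda_k-z)^{-1}\alpha_k\psi_k\in\Dom(H)$ satisfies $(H-z)\phi_N=T_N\psi\to\psi$. Fourth, Abel summation rewrites
\[
\phi_N=\frac{T_N\psi}{\lambda_N-z}+\sum_{k=1}^{N-1}T_k\psi\Bigl(\frac{1}{\lambda_k-z}-\frac{1}{\lambda_{k+1}-z}\Bigr),
\]
and the boundary term vanishes as $N\to\infty$ since $\|T_N\psi\|\le M\|\psi\|$ while $|\lambda_N-z|\to\infty$. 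Fifth, the elementary estimate
\[
\sum_{k=1}^{\infty}\Bigl|\frac{1}{\lambda_k-z}-\frac{1}{\lambda_{k+1}-z}\Bigr|\le\int_{\Real}\frac{d\lambda}{|\lambda-z|^{2}}=\frac{\pi}{|\Im z|},
\]
obtained by writing each difference as $\int_{\lambda_k}^{\lambda_{k+1}}(\lambda-z)^{-2}d\lambda$, bringing absolute values inside, and telescoping, combined with $\|T_k\psi\|\le M\|\psi\|$, produces norm convergence of $\phi_N$ to some $\phi$ with $\|\phi\|\le M\pi\|\psi\|/|\Im z|$. Finally, closedness of $H$ identifies $\phi=(H-z)^{-1}\psi$.

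The step I expect to require the most care is the last identification of the limit with the resolvent: because $\{\psi_k\}$ is only a Schauder basis and not a Riesz basis, there is no a priori absolute convergence of the biorthogonal series $\sum_k(\lambda_k-z)^{-1}\alpha_k\psi_k$, so one must genuinely detour through Abel summation to obtain norm convergence, and then closedness of $H-z$ is needed to pass from $\phi_N\to\phi$ together with $(H-z)\phi_N\to\psi$ to the conclusions $\phi\in\Dom(H)$ and $(H-z)\phi=\psi$. Everything else reduces to Proposition~\ref{Prop.basis} and a one-variable integral that any reader can perform in a line.
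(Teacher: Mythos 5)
Your proof is correct. The paper itself gives no argument for Proposition~\ref{Prop.basis.D}, referring instead to \cite[Thm.~3]{Davies_2000}, so a self-contained verification is welcome, and yours is clean and complete. All five ingredients check out: the uniform bound on the partial sums $T_N=\sum_{k=1}^N P_k$ does come from Proposition~\ref{Prop.basis}; the hypothesis $\lambda_n\ge b n^\beta$ indeed forces $\sigma(H)\subset(0,\infty)$ so that every $z\notin\Real$ is in the resolvent set; the Abel rearrangement
\[
\phi_N=\frac{T_N\psi}{\lambda_N-z}+\sum_{k=1}^{N-1}T_k\psi\Bigl(\frac{1}{\lambda_k-z}-\frac{1}{\lambda_{k+1}-z}\Bigr)
\]
is the standard summation-by-parts identity with $A_0=0$; the estimate
\[
\sum_{k\ge1}\Bigl|\frac{1}{\lambda_k-z}-\frac{1}{\lambda_{k+1}-z}\Bigr|\le\int_{\lambda_1}^{\infty}\frac{\dd\lambda}{|\lambda-z|^2}\le\frac{\pi}{|\Im z|}
\]
is exactly right, using that each difference equals $\int_{\lambda_k}^{\lambda_{k+1}}(\lambda-z)^{-2}\,\dd\lambda$ over disjoint, increasing intervals; and closedness of $H-z$ (a consequence of compact resolvent) then identifies the limit $\phi$ with $(H-z)^{-1}\psi$. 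The resulting bound $\|(H-z)^{-1}\|\le M\pi/|\Im z|$ is strictly stronger than the stated inequality (it gives $m=0$), and you correctly identify that the polynomial rate $n^\beta$ plays no role beyond guaranteeing the eigenvalues are real, positive and tend to infinity. Two small caveats worth making explicit in a polished write-up: first, the telescoping step tacitly assumes the eigenvalues are indexed in increasing order, which is the natural reading of $\lambda_n\ge bn^\beta$ but should be stated, since a conditional basis need not remain a basis under permutation; second, since the proposition asks for positive $m$, you should simply record that the bound $M\pi/|\Im z|$ is dominated by $k(1+|z|^2)^{m/2}/|\Im z|$ for any fixed $m>0$.
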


In Section~\ref{Sec.examples} we shall give a number of examples
of non-self-adjoint operators which have 
no basis of (generalized) eigenvectors.
This will immediately follow upon demonstrating 
that the norms of their eigenprojectors diverge 
or that the pseudospectrum of~$H$ does not obey the restriction \eqref{R.Dav}.
The resolution of the identity, 
which plays a central role in quantum mechanics,
is simply not available.

\paragraph{$\bullet$ Completeness.}
There exist weaker notions of basis in the literature
(\eg\ Abel-Lidskii basis),
but they are not nearly as useful in applications.
The weakest is to merely require 
the \emph{completeness} of~$\{\psi_k\}_{k=1}^\infty$,
\ie~that the span of $\{\psi_k\}_{k=1}^\infty$ is dense in~$\H$, 
or equivalently $\big({\rm span} \{\psi_k\}_{k=1}^\infty)^{\perp} = \{0\}$.
We have a converse of Proposition~\ref{Prop.basis} for a minimal complete set $\{\psi_k\}_{k=1}^\infty$, which implies that the projections $P_k$ in \eqref{biorthogonal.Pk} may be defined:
if the sums of projectors~$\sum_{k=1}^N P_k$ are uniformly bounded, 
then $\{\psi_k\}_{k=1}^\infty$ is a basis,
\cf~\cite[Lem.~3.3.3]{Davies_2007}.

%-----------------%
\section{Examples}\label{Sec.examples}
%-----------------%
%
In this last section we present a number of examples exhibiting
remarkable pseudospectral behaviour due to non-Hermiticity
and use them to demonstrate that the concept of pseudospectrum is a more
relevant consideration for the description of the operators,
specifically in the context of quantum mechanics.

We restrict ourselves mainly to the concrete situation of 
one-dimensional differential operators
familiar from ``non-Hermitian quantum mechanics''.
However, we emphasize that there exist versions of Theorem~\ref{Thm.DSZ}
for partial differential (even pseudodifferential) operators too
(see~\cite{Zworski_2001,Dencker-Sjostrand-Zworski_2004})
and it is straightforward to construct similar examples
with non-trivial pseudospectra in higher dimensions.
Non-Hermitian spectral effects for $\mathcal{PT}$-symmetric waveguides
were previously observed in~\cite{BK,KT,BK2}.

To avoid complicated notation, 
we study an operator~$H$ 
which changes in each subsection.  
Where there is a parameter dependence, we may write a subscript as in~$H_h$. 
The notation~$\Hsa$ will denote a self-adjoint operator, 
related to~$H$ usually via a formal (unbounded) conjugation.
The symbol~$C$ (occasionally with a subscript) will denote
a generic constant which may change from line to line.

\subsection{The imaginary Airy operator}
\label{subsec.Airy}
The non-self-adjoint operator 
\begin{equation}\label{Airy}
  H := - \frac{\dd^2}{\dd x^2} + ix
  \qquad \mbox{on} \qquad
  \sii(\Real)
\end{equation}
arises in the Ginzburg-Landau model of superconductivity
\cite{Almog_2008,Almog-Helffer-Pan_2013,Almog-Helffer-Pan_2012}
and also in the study of resonances of quantum Hamiltonians
with electric field via the method of complex scaling~\cite{Herbst-1979-64}. 
It is well defined as a closed operator when
considered on its maximal domain
\begin{equation}\label{Airy.dom}
  \Dom(H) := \big\{
  \psi \in \sii(\Real) \ | \, -\psi''+ix\psi \in \sii(\Real)
  \big\}
  \,.
\end{equation}
Indeed, such a definition coincides with the closure of~\eqref{Airy}
initially defined on smooth functions of compact support,
\cf~\cite[Cor.~VII.2.7]{Edmunds-Evans}.
More importantly, $H$~is $m$-accretive, 
\ie,\ the numerical range $\Num(H)$ is contained in 
the closed right complex half-plane and the resolvent bound 
$
  \|(H-z)^{-1}\| \leq |\Re z|^{-1} 
$
holds for all~$z$ with $\Re z < 0$.
The adjoint~$H^*$ of~$H$ is simply obtained 
by replacing $i$ with $-i$ in~\eqref{Airy} and~\eqref{Airy.dom}.	
Furthermore, $H$~is $\mathcal{PT}$-symmetric and $\mathcal{T}$-self-adjoint.  

\paragraph{$\bullet$ Spectrum.}
Integrating by parts, we easily check that 
\begin{align}\label{for.advection}
  \|\psi'\|^2 &= \langle\psi',\psi'\rangle 
  = \langle\psi,-\psi''\rangle
  \leq \|\psi\| \|\psi''\|
  \leq \delta \|\psi''\|^2 + \delta^{-1} \|\psi\|^2
  \,,
  \\ \nonumber
  \|H\psi\|^2 &= \|\psi''\|^2 + \|x\psi\|^2 + 2\Re\langle ix\psi,-\psi''\rangle
  = \|\psi''\|^2 + \|x\psi\|^2 + 2\Re\langle i\psi,\psi'\rangle
  \\ \nonumber
  &\geq \|\psi''\|^2 + \|x\psi\|^2 - 2 \|\psi\| \|\psi'\|
  \geq \|\psi''\|^2 + \|x\psi\|^2 - \delta \|\psi'\|^2 - \delta^{-1} \|\psi\|^2
  \,,
\end{align}
for every $\psi \in C_0^\infty(\Real)$ and $\delta > 0$.
Combining these inequalities for $\delta > 0$ sufficiently small
and using the density of $C_0^\infty(\Real)$ in $\Dom(H)$,
we arrive at the non-trivial fact that
$$
  \Dom(H) = \big\{
  \psi \in W^{2,2}(\Real) \ | \ x\psi \in \sii(\Real)
  \big\}
  \,.
$$
Here $W^{2,2}(\Real)$ denotes the usual Sobolev space of functions 
in $\sii(\Real)$ whose weak first and second derivatives
belong to $\sii(\Real)$, \cf~\cite{Adams}. 
Now it is clear that $\Dom(H)$ is
compactly embedded in $\sii(\Real)$ and~$H$ is an operator
with compact resolvent, \cf~\cite[Thm.~XIII.65]{RS4}. 
It follows that the spectrum of~$H$ 
may consist of isolated eigenvalues only. 
However, the eigenvalue equation $H\psi=\lambda\psi$ implies
that for any $c \in \Real$ we also have $H\psi_c=\lambda_c\psi_c$ 
with $\psi_c(x):=\psi(x+c)$ and $\lambda_c := \lambda - i c$.
Consequently, the spectrum of~$H$ is empty,
$$
  \sigma(H) = \varnothing
  \,.
$$
This is a peculiar property, possible for non-self-adjoint operators only.
We deduce that~$H$ is not similar to a self-adjoint operator,
via a bounded and boundedly invertible similarity transform.

\paragraph{$\bullet$ Pseudospectrum.}
While~$H$ has no spectrum, the pseudospectrum of~$H$
is far from trivial. 
\emph{A priori}, we only know that the pseudospectrum
is symmetric with respect to the real axis,
\ie~\eqref{antilinear.pseudo} holds,
which follows from the $\mathcal{PT}$-symmetry of~$H$.
In order to apply Theorem~\ref{Thm.DSZ},
we have to convert~$H$ into a semiclassical operator. 
This can be achieved by introducing the unitary transform~$\mathcal{U}$
on $\sii(\Real)$ defined by 
\begin{equation}\label{unitary}
  (\mathcal{U}\psi)(x) := \tau^{1/2} \psi(\tau x)
  \,,
\end{equation}
where $\tau \in \Real$ is positive (and typically large in the sequel).
Then 
$$
  \mathcal{U} H \mathcal{U}^{-1} = \tau H_h
  \qquad \mbox{with} \qquad
  H_h := - h^{2} \frac{\dd^2}{\dd x^2} + ix
  \qquad \mbox{and} \qquad
  h := \tau^{-3/2}
  \,.
$$
For the symbol~$f = \xi^2 + ix$ associated with~$H_h$ we have $\{f, \bar{f}\} = -4i\xi.$
Hence, the interior of the semiclassical pseudospectrum is
$\Lambda = \{z\in\Com \ | \ \Re z > 0 \}$, using definition \eqref{semiclass.pseudospec}.

The same translation argument which shows that the spectrum is empty shows that
\[
	\|(H-z)^{-1}\| = \|(H-\Re z)^{-1}\|.
\]
Note that $1 \in \Lambda$. Applying the unitary relation and Theorem~\ref{Thm.DSZ}, there exists some $C>1$ where, for $h$ sufficiently small (that is, $\tau > C_1$ for some $C_1 > 0$ sufficiently large),
$$
  \|(H-\tau)^{-1}\| 
  = \tau^{-1} \|(H_h-1)^{-1}\|
  > h^{2/3} C^{1/h}
  \,.
$$
We then have that $\tau \in \sigma_\eps(H)$ whenever $\tau^{-1}C^{\tau^{3/2}} > \eps^{-1}$.  We may simplify the inequality by taking logarithms: 
it reads
\[
	\tau^{3/2} - \frac{\log \tau}{\log C} > \frac{1}{\log C}\log\frac{1}{\eps}.
\]
Since $\log \tau$ is negligible compared with $\tau^{3/2}$ for $\tau > 0$ large, 
a sufficient condition to guarantee that $\tau \in \sigma_\eps(H)$ is given by
$$
\tau > C_2 \left(\log \frac{1}{\eps}\right)^{2/3}
$$
with some $C_2 >0$; this gives the correct order of growth as $\eps \to 0$.

Since the resolvent norm only depends on the real part of the spectral parameter $z$, we arrive at the conclusion that there exist $C_1, C_2 > 0$ such that, for all $\eps > 0$,
$$
  \sigma_\eps(H) \supseteq \left\{z \ \left| 
  \ \Re z \geq C_1 \ \& \ \Re z \geq C_2 \left(\log \frac{1}{\eps}\right)^{2/3} \right.
  \right\}.
$$
In particular, for any~$\eps$ there are complex points with positive real part
and magnitude only logarithmically large in $1/\eps$ that lie in the pseudospectrum $\sigma_\eps(H)$.

A quite precise study of the resolvent norm of $H$ as $\Re z \to \infty$ can be found in \cite[Cor.~1.4]{BordeauxMontrieux-2013}.

\paragraph{$\bullet$ Time evolution.}
Since~$H$ is $m$-accretive,
it is a generator of a one-parameter
contraction semigroup, $e^{-t H}$, on $\sii(\Real)$.
Here~$t$ can be interpreted as time, viewing $\psi(t,x) = e^{-t H}\psi(0,x)$ as 
arising as a solution of the parabolic equation $\partial_t \psi+H\psi=0.$
Using the Fourier transform,
it is possible to show (\cf~\cite[Ex.~9.1.7]{Davies_2007}) that
$$
  \|e^{-t H}\| = e^{-t^3/12}
  \,.
$$ 
Note that the time decay rate is not determined by 
the (nonexistent) spectrum of~$H$. 
In fact, the superexponential decay rate implies
by itself that the spectrum of~$H$ must be empty. 
We refer to \cite{Almog-Helffer-Pan_2013} for pseudospectral estimates 
on the decay of the semigroup in analogous higher-dimensional models.

\subsection{The imaginary cubic oscillator}
The non-self-adjoint (but $\mathcal{T}$-self-adjoint) operator 
\begin{equation}\label{cubic}
  H := - \frac{\dd^2}{\dd x^2} + ix^3
  \qquad \mbox{on} \qquad
  \sii(\Real)
\end{equation}
is considered the \emph{fons et origo}
of $\mathcal{PT}$-symmetric quantum mechanics
\cite{Bender-Boettcher_1998,BBJ},
but it was also considered previously in the context 
of statistical physics and quantum field theory~\cite{Fisher_1978}.
The existence of a metric operator 
and other spectral and pseudospectral properties of~$H$
have been recently analysed in~\cite{SK}.
Let us recall the basic results here,
referring to the last article for more details and references. 

\paragraph{$\bullet$ Spectrum.}
The operator~$H$ is again $m$-accretive 
when considered on its maximal domain 
(\ie~\eqref{Airy.dom} with~$x$ replaced by~$x^3$)
and its resolvent is compact.
Contrary to the imaginary Airy operator,
the spectrum is not empty; it is composed of
an infinite sequence of discrete real eigenvalues, 
\cf~\cite{Shin,DDT,Giordanelli-2014-of}.
As a new result, it is proved in~\cite{SK} that
the eigenfunctions form a complete set in~$\sii(\Real)$.

\paragraph{$\bullet$ Pseudospectrum.}
Employing the unitary transform~\eqref{unitary},
we introduce a semiclassical analogue of~$H$,
$$
  \mathcal{U} H \mathcal{U}^{-1} = \tau^3 H_h
  \qquad \mbox{with} \qquad
  H_h := - h^{2} \frac{\dd^2}{\dd x^2} + ix^3
  \qquad \mbox{and} \qquad
  h := \tau^{-5/2}
  \,.
$$
For the symbol~$f$ associated with~$H_h$ 
we now have $\{f,\bar{f}\}=-12i\xi x^2$, 
so that the interior of the semiclassical pseudospectrum is
$\Lambda = \{z\in\Com \ | \ \Re z > 0 \ \& \ \Im z \not=0 \}$.

The translation argument used for the imaginary Airy operator is unavailable for the imaginary cubic oscillator, but we nonetheless have 
by Theorem~\ref{Thm.DSZ}
for any $z \in \Lambda$, there exists $C > 0$ sufficiently large and $h_0$ sufficiently small that, for all $0 < h \leq h_0$,
\begin{equation}\label{res.ix3}
  \|(H-\tau^3 z)^{-1}\| 
  = \tau^{-3} \|(H_h-z)^{-1}\|
  > h^{6/5} C^{1/h}
  \,.
\end{equation}
One may check that the exponential growth in Theorem~\ref{Thm.DSZ} may be made uniform on compact subsets $K \subset \Lambda$, as explained at the end of Appendix \ref{App.proofs}.  What is more, one may extend the reasoning to include real $z > 0$, despite the fact that formally $z \notin \Lambda$: one only needs to verify \eqref{e.unif.2} by hand, which is straightforward. 

Since we are using a scaling argument, we may reduce to fixing $\delta > 0$ and letting
\[
	K = \{z \in \Com \ | \ |z| = 1 \ \& \ |\arg z| < \pi/2-\delta\}.
\]
We therefore have that, 
for some positive constant~$C$ depending on~$\delta$, 
the pseudospectrum $\sigma_\eps(H)$ contains $\tau^3 K$ so long as $\tau > 0$ is large enough to verify $\tau^{-3}C^{\tau^{5/2}} > \eps^{-1}$.
We may then identify $\tau^3$ with the absolute value of the spectral parameter, and so $h = \tau^{-5/2} = |z|^{-5/6}$. Taking logarithms and discarding the negligible term involving logarithms, as before, allows us to conclude that, for any $\delta > 0$, there exist $C_1, C_2 > 0$ such that, for all $\eps > 0$,
\begin{equation}\label{CubicPseudospec}
	\sigma_\eps(H) \supseteq \left\{z \in \Com \ \left| \ |z| \geq C_1 \ \& \  |\arg z| < \left(\frac{\pi}{2}-\delta\right) \ \& \ |z| \geq C_2 \left(\log \frac{1}{\eps}\right)^{6/5} \right. \right\}.
\end{equation}
Again, for any~$\eps$ there are complex points with positive real part, 
non-zero imaginary part, and large magnitude 
that lie in the pseudospectrum $\sigma_\eps(H)$.
A numerical computation of some pseudospectral lines of~$H$ 
is presented in Figure~\ref{Fig.cubic}. 
The asymptotic behaviour of the pseudospectral lines is
studied in \cite[Prop.~4.1]{BordeauxMontrieux-2013}.
\begin{figure}[h!]
\begin{center}
%\fbox{\dots}
\includegraphics[width=0.99\textwidth]{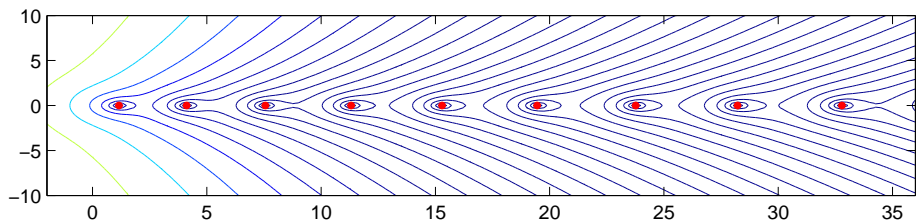}
\end{center}
\caption{Spectrum (red dots) and pseudospectra 
(enclosed by the blue contour lines)
of the imaginary cubic oscillator.}
\label{Fig.cubic}
\end{figure}
The result is surprising because it implies the existence
of pseudoeigenvalues very far from the spectrum of~$H$.
In view of~\eqref{RS-thm}, it follows that a very small 
perturbation~$V$ added to~$H$ can create (genuine) eigenvalues
very far from the spectrum of the unperturbed operator~$H$. 
In this way, the spectrum is highly unstable.

As a consequence of the existence of the highly non-trivial pseudospectrum,
we also get that~$H$ is not quasi-Hermitian with a bounded
and boundedly invertible metric (Proposition~\ref{Prop.test}),
it is not similar to a self-adjoint operator via bounded
and boundedly invertible transformations (Proposition~\ref{Prop.equivalence}),
and the eigenfunctions of~$H$ do not form a Riesz basis
(Proposition~\ref{Prop.Riesz}). 
It has been shown recently in~\cite{Henry-2013b} 
that the norms of the spectral projections grow as
\begin{equation}
\lim_{k \to \infty} \frac{\log \|P_k\|}{k} = \frac{\pi}{\sqrt{3}},
\end{equation}
and therefore the eigenfunctions cannot form a basis, 
\cf~Proposition~\ref{Prop.basis}. 
Alternatively, we can derive the latter 
using Proposition~\ref{Prop.basis.D} and~\eqref{res.ix3}.

\subsection{An advection-diffusion operator}
The examples which follow are similar to self-adjoint operators
via \emph{unbounded} transformations. In order to emphasize the danger
of formal manipulations when the transformations are allowed to be unbounded,
in this subsection we present a very simple non-self-adjoint operator
for which heuristic approaches would lead to a number of striking paradoxes.
The example is borrowed from \cite{Reddy-Trefethen_1994,Davies_2002},
although it is natural to expect that it has appeared in many other works.
We are indebted to E.~B.~Davies and M.~Marletta~\cite{DM_2013}
for telling us about this example and for proposing
the possibility of the non-invariance of point spectra
discussed below.

Consider the differential operator
\begin{equation}\label{advection}
  H := - \frac{\dd^2}{\dd x^2} + \frac{\dd}{\dd x}
  \qquad \mbox{on} \qquad
  \sii(\Real)
  \,.
\end{equation}
The diffusion term $-\dd^2/\dd x^2$ corresponds to the familiar 
free Hamiltonian in quantum mechanics, 
which is self-adjoint when defined on
$
W^{2,2}(\Real) 
$.
The advection term represents a relatively bounded perturbation
with relative bound equal to zero, 
so that $\Dom(H)=W^{2,2}(\Real)$,
\cf~\cite[Sec.~IV.1.1]{Kato}.
Employing the first line of~\eqref{for.advection}, 
we find 
$$
  |\Im (\psi,H\psi)| 
  \leq \|\psi\| \|\psi'\|
  = \|\psi\| \sqrt{\Re (\psi,H\psi)}
$$
for every $\psi \in \Dom(H)$.
It follows that the numerical range $\Num(H)$ 
is contained in the parabolic domain 
$  
  \Sigma := \{z \in \Com \ | \ 
  \Re z \geq 0 \ \& \ \ |\Im z|^2 \leq \Re z \}
$.
In particular, $H$~is not only $m$-accretive but even $m$-sectorial,
meaning that, in addition, its numerical range is a subset of a sector
in the complex plane.
%$H$~is not self-adjoint, but it is normal.
By conjugating with the Fourier transform,
it is easy to check that the spectrum of~$H$
coincides with the parabola
$$
  \sigma(H) = \partial\Sigma 
  = \{z \in \Com \ | \ 
  \Re z \geq 0 \ \& \ |\Im z|^2 = \Re z \}
  \,,
$$
see Figure~\ref{Fig.advection}.

\begin{figure}[h!]
\begin{center}
\includegraphics[width=0.99\textwidth]{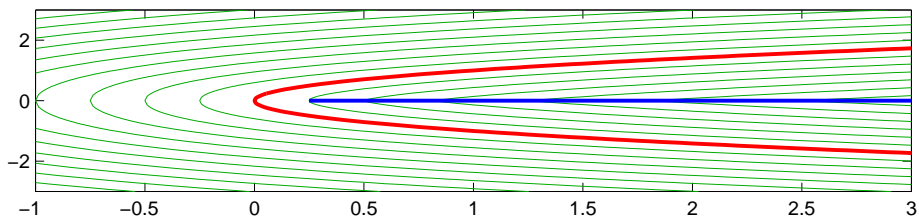}
\end{center}
\caption{Spectrum of the advection-diffusion operator (red parabola) 
compared with the spectrum of the shifted free Hamiltonian (blue line)
to which the former is formally similar
and pseudospectral contours (green curves parallel
to the parabola).}\label{Fig.advection}
\end{figure}

\paragraph{$\bullet$ Non-invariance of the continuous spectrum.}
Completing the square, we may write
$$
  H = - \left(\frac{\dd}{\dd x}-\frac{1}{2}\right)^2 + \frac{1}{4}
  \,.
$$
This suggests that the similarity transformation $\Omega := e^{-x/2}$ 
\emph{formally} maps~$H$ to a shifted free Hamiltonian
\begin{equation}\label{advection.similar}
  \Hsa := - \frac{\dd^2}{\dd x^2} + \frac{1}{4}
  \,,
\end{equation}
which is self-adjoint on 
$\Dom(\Hsa) := W^{2,2}(\Real)$.
The word ``formally'' is absolutely essential here,
since neither~$\Omega$ nor $\Omega^{-1}$ is bounded
and~\eqref{similar} cannot hold as an operator identity.
Nevertheless, one can check that~\eqref{similar} holds
on smooth functions with compact support, 
which form a dense subset of both~$\Dom(H)$ and~$\Dom(\Hsa)$. 
Now we arrive at a surprising paradox because
$$
  \sigma(\Hsa) = [\mbox{$\frac{1}{4}$},\infty)
$$
substantially differs from the complex parabolic spectrum of~$H$, 
see Figure~\ref{Fig.advection}.
This is caused by the fact that the continuous spectrum
is in general not preserved by unbounded similarity transformations. 

\paragraph{$\bullet$ Non-invariance of the point spectrum.}
The situation is in fact even worse, since it may also happen
that even the eigenvalues are not preserved 
by the similarity transformation~$\Omega$.
Let us perturb the self-adjoint Hamiltonian~$H_0$ 
by a smooth non-trivial potential $V \leq 0$ 
which has a compact support in~$\Real$. 
Then it is well known (see, \eg, \cite[Cor.~4.5.2]{Schechter}) 
that the self-adjoint operator $\Hsa +V$ 
possesses at least one eigenvalue $\lambda < 1/4$.
The corresponding eigenfunction~$\psi$ has asymptotics
$\exp({-\sqrt{1/4-\lambda}\,|x|})$ as $x \to \pm\infty$.
It is also known that~$\lambda$ is positive 
provided that the supremum norm of~$V$ is sufficiently small, 
\cf~\cite{Si}. 
However, the corresponding ``eigenfunction'' $\Omega^{-1}\psi$ of~$H+V$
is not admissible because it is not square-integrable on~$\Real$.

\paragraph{$\bullet$ Pseudospectrum.}
Even if~$H$ is not self-adjoint
(nor $\mathcal{T}$-self-adjoint or $\mathcal{PT}$-symmetric), 
it is normal (and in fact real).
Hence the pseudospectra are trivial;
see Figure~\ref{Fig.advection}.
The situation would be very different if 
we considered~\eqref{advection} on a finite interval $(0,L)$,
subject to Dirichlet boundary conditions.
Let us denote this operator by~$H^{(L)}$.
Then the similarity transformation $\Omega=e^{-x/2}$
is bounded and boundedly invertible and~\eqref{similar} 
with the self-adjoint operator~$\Hsa^{(L)}$ 
that acts as~\eqref{advection.similar} on $(0,L)$,
subject to Dirichlet boundary conditions, is well defined.
We indeed have 
$$
  \sigma(H^{(L)}) = \sigma(\Hsa^{(L)}) 
  = \bigg\{ 
  \left(\frac{\pi k}{L}\right)^2+\frac{1}{4} \ \bigg| \ k=1,2,\dots
  \bigg\}
  .
$$
However, the pseudospectra of~$H^{(L)}$ substantially differ 
from those of~$\Hsa^{(L)}$; the former approach the parabola 
$\partial\Sigma$ in the limit as $L \to \infty$,
thus reflecting better the wild spectral instability of the limit;
see Figure~\ref{Fig.advection}. 
We refer to~\cite{Reddy-Trefethen_1994} for more details.

\subsection{The rotated harmonic oscillator}
\label{subsec.rot.ho}
The quantum Hamiltonian of the harmonic oscillator 
\begin{equation}\label{HO}
  \Hsa := - \frac{\dd^2}{\dd x^2} + x^2
  \qquad \mbox{on} \qquad
  \sii(\Real)
\end{equation}
is self-adjoint on its maximal domain.
The operator~$\Hsa$ has compact resolvent 
and its eigenvalues are well-known:
\begin{equation}\label{HO.spec}
  \sigma(\Hsa) = \{2 k + 1 \ | \ k=0,1,\dots\}
  \,.
\end{equation}
Of course, the pseudospectrum of~$\Hsa$ is trivial;
see~Figure~\ref{Fig.HO-like}.

\paragraph{$\bullet$ Creation and annihilation operators.}
Recall also the factorization $\Hsa = a^* a+1$,
where~$a^*$ and~$a$ are the creation and annihilation
operators defined by
\begin{equation}\label{creation}
  a^* := - \frac{\dd}{\dd x} + x
  \,, \qquad
  a := \frac{\dd}{\dd x} + x
  \,,
\end{equation}
with $\Dom(a)=\Dom(a^*):= 
W^{1,2}(\Real)
\cap \sii(\Real,x^2\,\dd x)$.
As the notation suggests, $a^*$~and~$a$ are mutually adjoint.  
The operator domain $\Dom(a)$ in fact coincides 
with the form domain of~$\Hsa$, 
for which $C_0^\infty(\Real)$ is a core.
We incidentally remark that $a^*$~and~$a$ represent  
interesting examples of highly non-self-adjoint operators
for which the resolvent operator is not defined 
at any point of the complex plane:
$$
  \sigma(a^*) = \sigma(a) = \Com
  \,.
$$
Indeed, $\psi(x):=\exp{(\lambda x -x^2/2)}$ is an eigenfunction of~$a$
for every $\lambda \in \Com$. 
On the other hand, the point spectrum of~$a^*$ is empty,
but every complex point belongs to the residual spectrum of~$a^*$.
The latter follows by the general fact that the orthogonal complement 
of the range of a densely defined closed operator in a Hilbert space 
is equal to the kernel of its adjoint.

\paragraph{$\bullet$ Complex dilation.}
The rotated harmonic oscillator~$H$ is \emph{formally} obtained 
by the similarity transformation~\eqref{similar},
where~$\Omega^{-1}$ is the complex dilation operator defined by
$
  (\Omega^{-1}\psi)(x) := e^{i\theta/4} \psi(e^{i\theta/2} x)
$. 
If~$\theta$ were purely imaginary, $\Omega^{-1}$~would be unitary,
but we assume $\theta \in \Real$ 
when~$\Omega^{-1}$ is in fact unbounded.
The formal computation yields
\begin{equation}\label{HO.rotated}
  H = - e^{-i\theta} \frac{\dd^2}{\dd x^2} + e^{i\theta} x^2
  = e^{-i\theta} \left(
  - \frac{\dd^2}{\dd x^2} + e^{2i\theta} x^2
  \right)
  \qquad \mbox{on} \qquad
  \sii(\Real)
  \,,
\end{equation}
which we take as a definition and restrict to $|\theta| < \pi/2$. 
The operator~$H$ is sometimes referred to as Davies' oscillator
due to his important investigation~\cite{Davies_1999a}
(see also \cite[Sec.~14.5]{Davies_2007} for a summary
and other references).
The presence of the prefactor $e^{-i\theta}$ in our definition
is inessential, but it is useful for symmetry reasons.
In particular, we have
$$
  |\Im (\psi, H\psi)| 
  \leq |\tan\theta| \ \Re (\psi,H \psi)
  = |\sin\theta| \ (\psi,\Hsa\psi)
$$
for every $\psi \in C_0^\infty(\Real)$.
That is, under our restriction on~$\theta$, the operator
$H$~can be understood as obtained as a relatively 
form bounded perturbation of~$\Hsa$,
keeping the same form domain $\Dom(a)$.
The numerical range $\Num(H)$ lies in the symmetric sector
$
  \{z\in\Com \ | \ |\Im z| \leq |\tan\theta| \Re z \ \& \ \Re z \geq 0 \}
$.
Unless $\theta=0$, $H$~is neither self-adjoint nor $\mathcal{PT}$-symmetric,
but it is always $\mathcal{T}$-self-adjoint.

\paragraph{$\bullet$ Spectrum and pseudospectrum.}
The resolvent of~$H$ is again compact 
and the spectrum coincides with that of~$\Hsa$:  
\begin{equation}\label{sp.invariance}
  \sigma(H) = \sigma(\Hsa)
  \,.
\end{equation}
In particular, the spectrum is real. 
However, the pseudospectrum and basis properties of the eigenfunctions
are very different from the self-adjoint situation.
In the same way as we applied Theorem~\ref{Thm.DSZ}
to the imaginary Airy operator or cubic oscillator,
we find 
\[
	\Lambda = \{z\in\Com\backslash\{0\} \ | \ |\arg z| < \theta \}.
\]
When $\theta = 0$, $\Lambda$ is empty and~$\Hsa$ is self-adjoint, 
so that we know that its pseudospectrum is trivial.
If $\theta\not=0$, however, we have exponential growth for the resolvent indicated by Theorem~\ref{Thm.DSZ}, which may be made uniform for a compact subset contained in the interior of the semiclassical pseudospectrum. What is more, from \cite[Thm.~1.1, Rem.~1.3]{Hitrik-Sjostrand-Viola_2013}, we have upper bounds for the resolvent norm of essentially the same exponential type, though the gap in the constant leaves much to be understood about the precise behaviour.  

A scaling argument similar to that used for \eqref{CubicPseudospec} gives us an idea of size of the $\eps$-pseudospectrum, 
which includes more or less those $z \in \Num(H)$ for which $|z| > C\log(1/\eps)$, 
which grows very slowly as $\eps \to 0$.

Specifically, for any $\delta > 0$, we have from Theorem~\ref{Thm.DSZ} that there exist $C_1, C_2 > 0$ such that, for all $\eps > 0$,
\[
	\sigma_\eps(H) \supseteq \left\{z \ \left| \ |z| > C_1 \ \& \ |\arg z| \leq \theta - \delta \ \& \ |z| > C_2\log\frac{1}{\eps}\right. \right\}.
\]
The numerical range inequality \eqref{robust} gives that
\[
	\sigma_\eps(H) \subseteq 
	\left\{z \ \left| \ \dist(z,\Num(H)) < \eps\right.\right\}.
\]
What is more, rescaling the upper bound \cite[Eq.~(1-8)]{Hitrik-Sjostrand-Viola_2013} gives that there exists $C_3 > 0$ for which
\[
	\left\|(H- z)^{-1}\right\| \leq \frac{C_3 e^{C_3|z|}}{\dist(z,\sigma(H))}.
\]
We therefore see that $z$ cannot be in the pseudospectrum unless 
it is logarithmically large in $1/\eps$ or close to the spectrum: 
\[
	\sigma_\eps(H) 
        \subseteq \left\{z \, \left| \ |z| > 
        \frac{1}{C_3}\left(\log\frac{1}{\eps}-\log C_3\right)\right.\right\} 
	\cup \left\{z \, \left| \ \dist(z, \sigma(H)) 
        < C_3\eps e^{C_3|z|}\right.\right\}.
\]
In Figure \ref{Fig.Containment}, 
we have a diagram illustrating the sorts of regions 
containing and contained by the pseudospectrum. 
We emphasize that the constants involved were chosen 
by hand and do not reflect a precise application of the relevant theorems.
\begin{figure}
\centering
\includegraphics[width=0.8\textwidth]{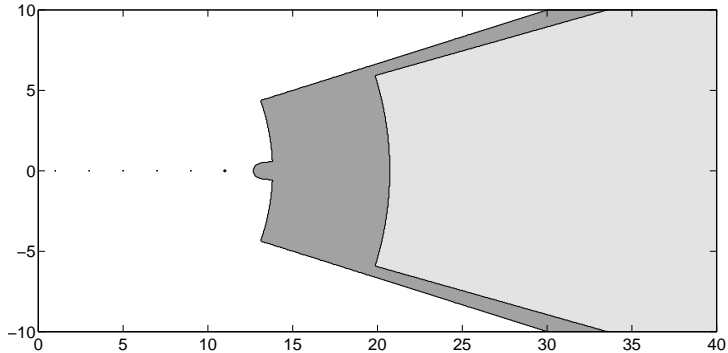}
\caption{Diagram of region (dark gray) which contains $\sigma_\eps(H)$ and region with pseudomodes (light gray) which is definitely contained in $\sigma_\eps(H)$.}
\label{Fig.Containment}
\end{figure}
As in the previous examples, 
for any~$\eps > 0$ there are complex points with positive real part, 
non-zero imaginary part, and large magnitude 
that lie in the pseudospectrum $\sigma_\eps(H)$;
see Figure~\ref{Fig.HO-like}.
The asymptotic behaviour of the pseudospectrum 
has been also studied in 
\cite{Pravda-Starov_2006,BordeauxMontrieux-2013}.

\paragraph{$\bullet$ Wild basis properties.}
$H_0$~is self-adjoint and we know \emph{a priori}
that its eigenfunctions (after normalization)
form a complete orthonormal family in~$\sii(\Real)$. 
If~$\theta\not=0$, it is still true that
the eigenfunctions of~$H$ and~$H^*$
(after a suitable normalization) form a biorthonormal sequence. 
However, the eigenfunctions do not form
a Riesz basis or even a basis, 
as can be deduced from the non-trivial pseudospectrum 
(\cf~Propositions~\ref{Prop.Riesz} and~\ref{Prop.basis.D}).

Furthermore, it was shown in~\cite{Davies-Kuijlaars_2004} 
that the norms of the spectral projections of~$H$ 
associated with eigenvalues $2k+1$ grow exponentially
as $k \to \infty$, and the exact exponential rate of growth was identified.
%Consequently, the eigenfunctions cannot form 
%any reasonable basis (Proposition~\ref{Prop.basis}). 
This exponential growth rate was sharpened in~\cite[Thm.~1.2, Rem.~1.3]{Henry} 
to include an asymptotic expansion for the remainder and a generalization to operators including
$-\dd^2/\dd x^2 + e^{2 i \theta} |x|^{2 m}$ for $m \in \Nat$ and natural restrictions on~$\theta$.  

In the case of the rotated harmonic oscillator, one has from \cite[Cor.~1.7, Ex.~3.6]{Viola_2013} an exact integral formula leading to a similar asymptotic expansion and a simplification of the formula for the exponential growth rate to
\begin{equation}\label{rot.osc.rate}
\lim_{k \to +\infty} \frac{\log \|P_k\|}{k} = \frac{1}{2}\log\left(\frac{1+|\sin \theta|}{1-|\sin\theta|}\right).
\end{equation}
The new and very short proof of this formula using special functions is given in Appendix \ref{app.Pk}.

Summing up,
despite the reality of the spectrum of~$H$
and the existence of a formal similarity transformation
given by a complex dilation, 
we see that~$H$ exhibits very different properties 
from those enjoyed by self-adjoint operators.

\subsection{The shifted harmonic oscillator}
\label{subsec.shift.ho}
The shift operator~$T_t$ is, for $t\in\Real$,
well defined as a bounded operator on~$\sii(\Real)$
by the formula $(T_t\psi)(x):=\psi(x+t)$.  
It is in fact a unitary group 
$T_t = e^{t \, \dd/\dd x}$
with the self-adjoint generator~$-i \, \dd/\dd x$ 
defined on $W^{1,2}(\Real)$ 
being the familiar momentum operator 
in quantum mechanics.
%
%\begin{equation}\label{moment}
%  p:=-i\frac{\dd}{\dd x}
%  \,, \qquad
%  \Dom(p):= W^{1,2}(\Real)
%  \,.
%\end{equation}
%
Let~$\Hsa$ be again the self-adjoint harmonic oscillator~\eqref{HO}
and perform the \emph{formal} conjugation~\eqref{similar}
with the \emph{unbounded} operator $\Omega := T_{-i}$.
Then we arrive at the non-self-adjoint operator
\begin{equation}\label{HO.shifted}
  H = - \frac{\dd^2}{\dd x^2} + (x+i)^2
  \qquad \mbox{on} \qquad
  \sii(\Real)
  \,,
\end{equation}
which we again take as a definition.
The extra term $2i x - 1$ clearly represents 
a relatively bounded perturbation of~$\Hsa$
with the relative bound equal to zero,
so that $\Dom(H)=\Dom(\Hsa)$.
It is also relatively form bounded 
with relative bound equal to zero,
so that~$H$ is $m$-sectorial on $\Dom(a)$.
The operator~$H$ is $\mathcal{PT}$-symmetric
and $\mathcal{T}$-self-adjoint.

\paragraph{$\bullet$ Spectrum and pseudospectrum.}
The above remarks on the smallness of the perturbations
imply that the resolvent of~$H$ is compact. 
Notice that conjugation with the Fourier transform casts~$H$ 
into a unitarily equivalent operator 
$$
  \hat{H}   := \left(-i\frac{\dd}{\dd x}-i\right)^2+x^2
  \,,
$$
which is related to~\eqref{HO} via the unbounded
similarity transform $\hat{\Omega}\psi(x):=e^x\psi(x)$.
Using that the eigenfunctions of the harmonic oscillator~\eqref{HO}
are known to decay superexponentially, it can be showed that
$$
  \sigma(H) = \sigma(\Hsa)
  \,,
$$ 
analogously to~\eqref{sp.invariance}.
 
Numerical computations show that the pseudospectrum of~$H$ is non-trivial;
see Figure~\ref{Fig.HO-like}.
Because of the different scaling properties of~$x^2$ and~$x$, 
Theorem~\ref{Thm.DSZ} is not directly applicable. Nonetheless,
we show, by adapting the construction of pseudomodes, that there
are indeed large complex pseudoeigenvalues in parabolic regions of the complex plane.

\begin{Theorem}\label{Thm.Shifted}
 	Let $H$ be the operator from \eqref{HO.shifted}.
	Fix $\eps > 0$.  Then there exists $C > 0$ sufficiently large 
 such that for all $z \in \Com$ for which $\Re z > C$ and
	\[
	 	|\Im z| \leq 2(1-\eps)\sqrt{\Re z},
	\]
	we have the resolvent lower bound
	\[
	 	\|(H-z)^{-1}\| \geq \frac{1}{C}e^{\sqrt{\Re z}/C}.
	\]
\end{Theorem}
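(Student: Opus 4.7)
The plan is to convert $H-z$ into a semiclassical operator by rescaling, construct a WKB pseudomode whose expansion parameter is $\sqrt{k}$ (not $k$), and then unscale.

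Setup. Introduce the unitary rescaling $(\tilde{U}\psi)(\eta) = E^{1/4}\psi(\sqrt{E}\,\eta)$ with $E:=\Re z$ and set $k:=1/E$. A direct computation gives $\tilde{U}H\tilde{U}^{-1} = k^{-1}\mathcal{L}_k$ with
\[
  \mathcal{L}_k := -k^2\partial_\eta^2 + (\eta + i\sqrt{k})^2,
\]
so $\tilde{U}(H-z)\tilde{U}^{-1} = k^{-1}(\mathcal{L}_k - \lambda)$ with $\lambda := kz = 1 + ik\,\Im z$. The hypothesis $|\Im z| \leq 2(1-\eps)\sqrt{E}$ becomes $\Im\lambda = \nu\sqrt{k}$ with $\nu := \Im z/\sqrt{E} \in [-2(1-\eps),2(1-\eps)]$. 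A pseudomode $\tilde\psi_k \in L^2(\Real)$ with $\|(\mathcal{L}_k-\lambda)\tilde\psi_k\| \leq C e^{-1/(C\sqrt{k})}\|\tilde\psi_k\|$ then yields $\|(H-z)^{-1}\| = k\|(\mathcal{L}_k-\lambda)^{-1}\| \geq C^{-1} E\, e^{\sqrt{E}/C}$, and absorbing the polynomial prefactor into an enlarged constant gives the statement.

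Classical picture. Regard $\mathcal{L}_k$ as the quantization of $f(\eta,\xi;k)=\xi^2+(\eta+i\sqrt{k})^2$. A straightforward computation yields $\{f,\bar f\}(\eta,\xi)/(2i) = -4\sqrt{k}\,\xi$, which already signals the fractional scaling: the imaginary perturbation (and therefore the bracket) is of size $\sqrt{k}$, not $O(1)$. The equation $f=\lambda$ in real phase space has the unique solution $(\eta_0,\xi_0) = (\nu/2,-\sqrt{1+k-\nu^2/4})$, and the constraint $|\nu| \leq 2(1-\eps)$ keeps $|\xi_0| \geq \sqrt{\eps(2-\eps)}$ uniformly, so $(\eta_0,\xi_0)$ stays in the region where the bracket is strictly positive and away from the turning-point set $\{\xi = 0\}$.

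WKB with $\sqrt{k}$-expansion. Substitute $\tilde\psi(\eta) = \chi(\eta)\,a(\eta;k)\,e^{iS(\eta;k)/k}$, with $\chi$ a smooth cutoff equal to $1$ on a fixed neighbourhood of $\eta_0$, and expand $S=S_0+\sqrt{k}\,S_1+k\,S_2+\cdots$, $a=a_0+\sqrt{k}\,a_1+\cdots$ in powers of $q:=\sqrt{k}$. The leading Hamilton--Jacobi equation reads $(S_0')^2 = 1-\eta^2$, solved by $S_0'(\eta)=-\sqrt{1-\eta^2}$; at order $\sqrt{k}$ one finds $S_1'(\eta) = -i(\nu-2\eta)/(2\sqrt{1-\eta^2})$, so $S_1$ is purely imaginary, $S_1(\eta_0)=0$, and a short computation gives $\Im S_1''(\eta_0) = 1/\sqrt{1-\nu^2/4} > 0$. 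Consequently
\[
  |e^{iS(\eta)/k}| \;\sim\; \exp\!\left(-\frac{(\eta-\eta_0)^2}{2\sqrt{k}\sqrt{1-\nu^2/4}}\right),
\]
a Gaussian of width $\sim k^{1/4}$ about $\eta_0$. The higher-order HJ and transport equations are solved recursively near $\eta_0$; because the symbol is a polynomial (uniformly analytic in $\eta$), the resulting coefficients satisfy Gevrey-$1$ bounds in $q$, and optimal truncation of the series in the spirit of Theorem~\ref{Thm.DSZ} produces a formal solution satisfying $\|(\mathcal{L}_k-\lambda)\tilde\psi_k\| \leq C e^{-1/(C\sqrt{k})}\|\tilde\psi_k\|$; cutoff errors are absorbed since $\tilde\psi_k$ itself is already $e^{-c/\sqrt{k}}$-small outside the support of $\chi$.

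Main obstacle. The core difficulty is exactly the point flagged in the introduction: the imaginary part of the symbol, and hence the Poisson bracket $\{f,\bar f\}$, is of size $\sqrt{k}$ rather than $O(1)$, so the principal symbol is real and the standard Dencker--Sj\"ostrand--Zworski bracket criterion is vacuous. The whole asymptotic construction must therefore be reorganized as an expansion in $\sqrt{k}$, and one must verify Gevrey-$1$ bounds on the corresponding coefficients uniformly in $k$ and $\nu$ so that the optimal-truncation argument delivers the advertised $e^{-c/\sqrt{k}}$ error. Once this adaptation is carried out, the scaling of the first paragraph converts the bound into $\|(H-z)^{-1}\| \geq C^{-1} e^{\sqrt{\Re z}/C}$, as claimed.
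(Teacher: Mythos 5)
Your setup matches the paper's: the rescaling $y = (\Re z)^{1/2}\eta$, the semiclassical operator with fractional perturbation $2ih^{1/2}y$, the observation that the Poisson bracket is of size $\sqrt{h}$ so the Gaussian width is $h^{1/4}$ rather than $h^{1/2}$, and the final bookkeeping ($e^{-1/(C\sqrt{h})}$ error, $h^{1/8}$ pseudomode norm, polynomial prefactors absorbed). The key structural observation — that the $\sqrt{h}$-sized imaginary part prevents a verbatim application of the Dencker--Sj\"ostrand--Zworski machinery — is also the paper's starting point. Where you diverge is in the remedy. You propose reorganizing the \emph{entire} WKB hierarchy (phase \emph{and} amplitude) as an expansion in $q=\sqrt{k}$, and appeal to Gevrey-1 bounds in this reorganized expansion plus optimal truncation. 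This is plausible in spirit, but you assert rather than prove the two delicate points: (i) the Gevrey-1 estimates on the $q$-coefficients, which do not follow directly from the Cauchy-estimate bootstrap in the proof of Theorem~\ref{Thm.DSZ} because the transport hierarchy is different after the reorganization; and (ii) the treatment of the residual eikonal error $\bigl((S^{(J)})'\bigr)^2 + (\eta+i\sqrt{k})^2 - \lambda = O(q^{J+1})$ left over by truncating the phase series, which couples to the amplitude in a way that is not just a higher-order transport equation.

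The paper sidesteps both issues. Because the symbol $\tilde f(y,\eta)=\eta^2 + y^2 + 2ih^{1/2}y - h$ is an explicit quadratic, the eikonal equation is solved \emph{exactly}, yielding the $h$-dependent phase $\varphi'(y) = -\sqrt{(1+h+it)-2ih^{1/2}y-y^2}$, which is analytic and bounded away from zero uniformly for $h$ small on a fixed neighbourhood of $y_0$ once $|y_0| < 1-\eps$. With the exact phase there is no residual eikonal error, the amplitude is expanded in integer powers of $h$ exactly as in the proof of Theorem~\ref{Thm.DSZ}, and the Cauchy-estimate bounds $\|a_j\| \leq C^{j+1}j^j$ carry over without change. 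The \emph{only} modification is the accounting: since $\Im\varphi''(y) \asymp \sqrt{h}$ rather than $\asymp 1$, the cutoff error is $e^{-1/(C\sqrt{h})}$ and the pseudomode $L^2$-norm is $\gtrsim h^{1/8}$. So the paper's route is both shorter and fully rigorous, whereas yours, while capturing the right phenomenon and scalings, would still need the Gevrey bounds for the $\sqrt{k}$-reorganized series and the residual-eikonal bookkeeping to be carried out before it could be called a proof.
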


We postpone the proof to Appendix \ref{App.proofs}.

\begin{figure}[h!]
\begin{center}
\includegraphics[width=0.99\textwidth]{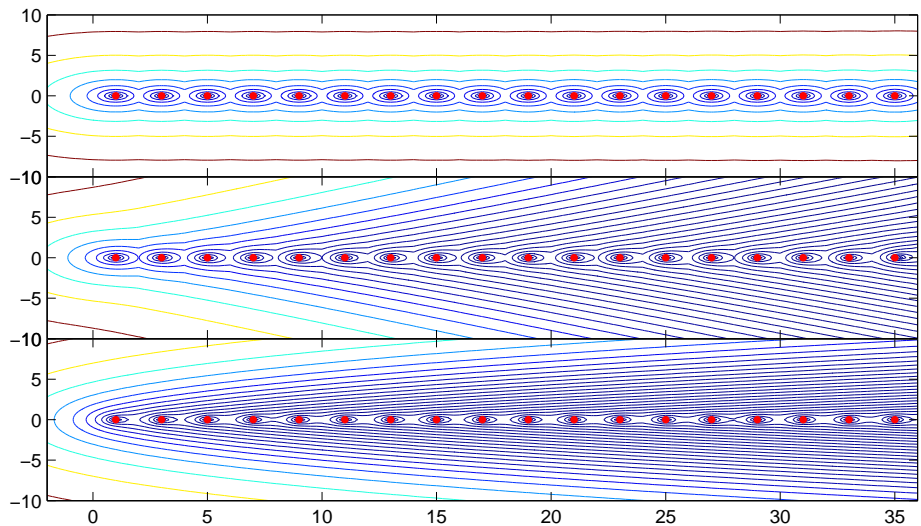}
\end{center}
\caption{From top to bottom: Pseudospectra of 
the self-adjoint~\eqref{HO}, 
rotated~\eqref{HO.rotated} with $\theta=\pi/4$ 
and shifted~\eqref{HO.shifted} 
harmonic oscillators which are formally conjugate to each other.
Although the spectra (red dots) coincide in all these examples,
the pseudospectra exhibit striking differences.}\label{Fig.HO-like}
\end{figure}

\paragraph{$\bullet$ Wild basis properties.}
The pseudospectral criteria of Propositions~\ref{Prop.test},~\ref{Prop.Riesz} 
and~\ref{Prop.basis.D} show that the eigenfunctions of~$H$ 
cannot form a Riesz basis or even a basis. 
Moreover, explicit formulas for the eigenfunctions can be used to prove their completeness in $L^2(\mathbb{R})$ and to find the rate of the norms of spectral projections, namely 
\begin{equation}\label{shift.osc.rate}
\lim_{k \to +\infty} \frac{\log \|P_k\|}{\sqrt{k}} = 2^{3/2}, \end{equation}
see \cite[Sec.~2]{Mityagin-2013} for the detailed proof.

\subsection{The decaying and singular potential perturbations of harmonic oscillator}

Examples in previous sections show that the perturbations 
of harmonic oscillator may have very non-trivial pseudospectra 
and wild basis properties, although the spectrum is preserved. 
In order not to leave an impression that this is typical 
for any perturbation of the harmonic oscillator, 
we mention that the system of all generalized eigenfunctions of perturbations 
$H = \Hsa + V$ of $\Hsa$ in~\eqref{HO} contains a Riesz basis 
when the perturbation~$V$ is multiplication 
by a function satisfying, for instance, 
$V \in L^p(\Real), 1 \leq p < \infty,$ or $V$ is a finite linear combination of 
$\delta$-potentials with complex couplings; \cf~\cite{Adduci-2012-10, 
Adduci-Mityagin_2012, Mityagin-2013a} for details and additional examples.
It is also showed in these works that the eigenvalues 
of the perturbed operator~$H$, 
excluding possibly a finite number, remain simple. Therefore, if the 
perturbation satisfies some symmetries, \eg~it is $\PT$-symmetric, we can conclude 
that the eigenvalues of~$H$ are real, again up to a finite number. 
Moreover, if we insert a sufficiently small coupling constant $\varepsilon$ in front 
of $V$, we obtain that all eigenvalues are simple and real. When the coupling 
constant is increased, low lying eigenvalues may collide, create a Jordan block, and 
then become complex. This behaviour is illustrated in Figure \ref{Fig.HO-pert.sp} for an example of~$H$ where~$V$ is $\PT$-symmetric and has the form \eqref{HO.pert.ex}.
\begin{figure}[htb!]
\begin{center}
\includegraphics[width=0.6\textwidth]{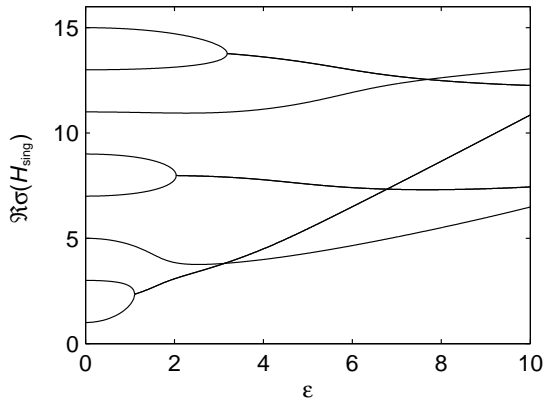} 
\end{center}
\caption{Real parts of eigenvalues of~$H$ with  \eqref{HO.pert.ex} 
as a function of~$\varepsilon$.
}
\label{Fig.HO-pert.sp}
\end{figure}
\paragraph{$\bullet$ Pseudospectrum.}
Let us recall that the facts that the eigensystem contains a Riesz basis and the spectral properties described above imply that~$H$ is similar to a diagonal operator up to a possible finite number of finite dimensional Jordan blocks corresponding to the low lying eigenvalues. This in turn means that the pseudospectrum of~$H$ 
is trivial if there are no Jordan blocks. 
If $\lambda_0$ is a degenerate eigenvalue for which the algebraic multiplicity is strictly larger than the geometric one, the Jordan block result in a more singular behaviour of the resolvent, \ie~
\[
	\|(H-z)^{-1}\| \sim |\lambda_0-z|^{-n}, \qquad n>1,
\]
around $\lambda_0$. This is visible in the plot of the pseudospectrum, since the peak around the degenerate eigenvalue $\lambda_0$ is wider, \ie~the level lines are further from $\lambda_0$, and we can call the pseudospectrum ``almost trivial''.
We illustrate such a behaviour with the example
\begin{equation}\label{HO.pert.ex}
V(x):= 
i \varepsilon \left( \frac{1}{\sqrt{|x+1|}} - \frac{1}{\sqrt{|x-1|}} \right),
\end{equation}
see Figure \ref{Fig.HO-pert}.
\begin{figure}[htb!]
\begin{center}
\includegraphics[width=0.3\textwidth]{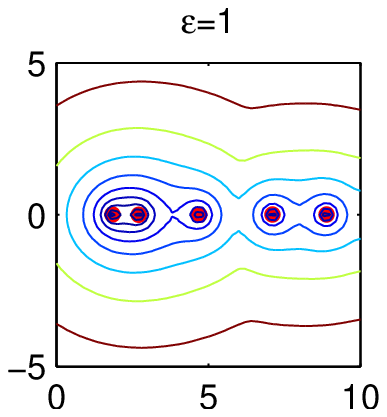} 
\quad 
\includegraphics[width=0.3\textwidth]{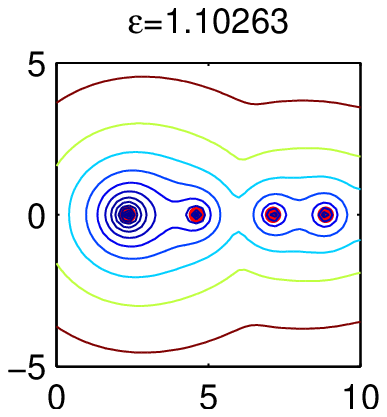} 
\quad
\includegraphics[width=0.3\textwidth]{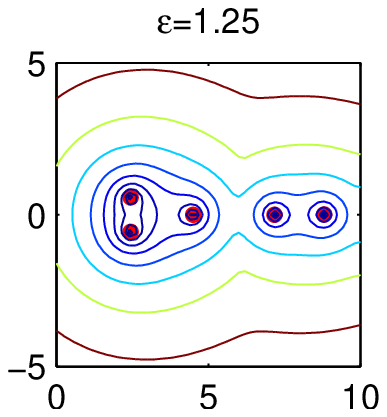}
\end{center}
\caption{Pseudospectra around low lying eigenvalues of \eqref{HO.pert.ex} for three increasing values of $\varepsilon$. From left to right: The simple eigenvalues (red dots) collide and create a Jordan block, then become again simple, but complex. The Jordan block structure is indicated by the wider peak around the degenerate eigenvalue.
}
\label{Fig.HO-pert}
\end{figure}

Similar features appear in for instance a model on a finite interval with $\PT$-symmetric Robin boundary conditions, \cf~\cite{KBZ,K4,KS,KSZ}, or for a $\PT$-symmetric square-well, \cf~\cite{Znojil_2001, Siegl-2011-50}. 

\subsection{The gauged oscillator (Swanson's model)}\label{subsec.Swanson}
We also study the operator
\begin{equation}\label{Swanson}
  H:=\omega \, a^*a + \alpha \, a^2 + \beta \, (a^{*})^2 + \omega
  \quad \mbox{in} \quad
  \sii(\Real)
  \,,
\end{equation}
introduced by Ahmed in \cite{Ahmed-2002-294} 
and later studied by Swanson in~\cite{Swanson_2004}. 
Here the creation and annihilation operators 
are defined in~\eqref{creation}
(the reader is warned that
a different convention is used in~\cite{Swanson_2004})
and $\omega$, $\alpha$, $\beta$ are real parameters.
It is assumed in \cite{Swanson_2004} only that
\begin{equation}\label{Swanson.cond1}
  \omega^2 - 4 \alpha\beta \geq 0
  \,,
\end{equation}
but we shall see that this condition is by far insufficient
to make the results rigorous.

\paragraph{$\bullet$ Rigorous definition of the operator.}
First of all, we always assume $\omega \not=0$ in order to 
define~$H$ as a perturbation of the harmonic oscillator~$\Hsa$ 
defined in~\eqref{HO};
without loss of generality, let us take $\omega>0$.
Second, we need to impose a condition on the smallness of~$\alpha$ and~$\beta$
in order to ensure that the extra unbounded terms 
$r := \alpha \, a^2 + \beta \, (a^{*})^2$
added in~\eqref{Swanson} to $\omega\,a^*a = \omega \Hsa - \omega$
do not completely change the character of 
the operator~$\Hsa$; 
see Section~\ref{subsec.el.q.op} for a discussion of 
the relevant condition in a more general setting.

Expressing annihilation and creation operators 
in terms of $x$ and $\dd/\dd x$, 
we obtain an equivalent form
\begin{equation}
H = -(\omega + \alpha + \beta) \frac{\dd^2}{\dd x^2} + (\omega - \alpha - \beta) x^2 + (\alpha - \beta) (\frac{\dd}{\dd x}x +x\frac{\dd}{\dd x}) 
\quad \mbox{on} \quad
  \sii(\Real).
\end{equation}
We introduce $H$ as an $m$-sectorial operator with compact resolvent under the condition 
\begin{equation}\label{Swanson.cond2}
  \omega - |\alpha + \beta| > 0
  .
\end{equation}
Notice that this condition is stronger than~\eqref{Swanson.cond1} and it
ensures that the real part of~$H$
indeed resembles the usual harmonic oscillator, 
because the constants in front of 
$-\dd^2/\dd x^2$ and $x^2$ are both positive. 

We define a form 
\begin{equation}
\begin{aligned}
t[\psi] & :=  (\omega + \alpha + \beta) \|\psi'\|^2 + (\omega - \alpha - \beta) \|x \psi\|^2 
\\ & \qquad 
+ (\alpha - \beta) ( \langle \psi', x \psi \rangle  + \langle x \psi, \psi' \rangle ),
\\
\Dom(t) & :=\Dom(a) 
= \{\psi \in W^{1,2}(\mathbb{R}) \, | \, x \psi \in L^2(\mathbb{R})\}.
\end{aligned}
\end{equation}
Since the imaginary part of $t$ satisfies
\begin{equation}
\begin{aligned}
| \Im t[\psi] | &= 
| (\alpha - \beta) (\langle \psi', x \psi \rangle  + \langle x \psi, \psi' \rangle ) | 
\\
& \leq |\alpha-\beta| (\|\psi'\|^2 + \|x\psi\|^2 ) 
\\
& \leq \frac{|\alpha-\beta|}{\omega -|\alpha + \beta|}  ((\omega + \alpha + \beta) \|\psi'\|^2 + (\omega - \alpha - \beta) \|x \psi\|^2)
\\
& 
= \frac{|\alpha-\beta|}{\omega -|\alpha + \beta|} \Re t[\psi],
\end{aligned}
\end{equation}
the form $t$ is sectorial. Moreover, it is closed, since $\Re t$ is closed on the given domain. The operator $H$ is then defined via the first representation theorem, \cf~\cite[Thm.~VI.2.1]{Kato}, as the unique $m$-sectorial operator associated with $t$. The operator $H$ has a compact resolvent, which follows from \cite[Thm.~VI.3.3]{Kato} and the fact that $\Re H$ has a compact resolvent; notice that $\Re H$ is an operator associated with $\Re t$ and it can be verified that 
\[
\Re H = - (\omega + \alpha + \beta) \, \frac{\dd^2}{\dd x^2}
  + (\omega - \alpha - \beta) \, x^2,
\]
as expected.

\paragraph{$\bullet$ Spectrum and pseudospectrum.}
Since the resolvent of $H$ is compact, the spectrum of $H$ is discrete.
To find the eigenvalues of~$H$, we observe that~$H$
is \emph{formally} similar to a self-adjoint harmonic oscillator.
Indeed, substituting~\eqref{creation} to~\eqref{Swanson}
and completing the square, we find
\begin{equation}\label{Swanson.complete}
  H = - (\omega-\alpha-\beta) 
  \left(
  \frac{\dd}{\dd x} + \frac{\beta-\alpha}{\omega-\alpha-\beta}\ x
  \right)^2
  + \frac{\omega^2 - 4 \alpha\beta}{\omega-\alpha-\beta} \ x^2
  \,.
\end{equation}
From this formula it is clear that~$H$ is self-adjoint
if, and only if, $\alpha=\beta$.
For $\alpha\not=\beta$, $H$~is neither self-adjoint
nor $\mathcal{T}$-self-adjoint,
but it is $\mathcal{PT}$-symmetric and in fact real. 
The difference $\alpha-\beta$ acts as a sort
of imaginary magnetic field.
The magnetic field can be gauged out in one dimension;
employing the same gauge transform for 
our ``imaginary magnetic field,'' 
we \emph{formally} check that $\tilde{H}_{\rm sa} = \Omega H \Omega^{-1}$ with
$$
  \tilde{H}_{\rm sa} := - (\omega-\alpha-\beta) \,
  \frac{\dd^2}{\dd x^2} 
  + \frac{\omega^2 - 4 \alpha\beta}{\omega-\alpha-\beta} \ x^2
  \,, \qquad
  \Omega := \exp{\left(
  \frac{\beta-\alpha}{\omega-\alpha-\beta} \, \frac{x^2}{2}
  \right)}
$$
The word ``formal'' refers again to the fact 
that~$\Omega$ is unbounded.
Nevertheless, the similarity relation 
$\tilde{H}_{\rm sa} = \Omega H \Omega^{-1}$ 
is well defined on eigenfunctions of~$\tilde{H}_{\rm sa}$
and we can deduce that
$$
  \sigma(H) = \sigma(\tilde{H}_{\rm sa}) 
  = \left\{ \left.
  (2 k + 1)\sqrt{\omega^2 - 4 \alpha\beta} 
  \ \right| \ k=0,1,\dots\right\}
  \,,
$$ 
since eigenfunctions of $H$ are complete in $L^2(\mathbb{R})$. The latter can be verified by adapting the standard proof of completeness of Hermite functions; see \eg~\cite[Ex.~2.2.3]{BEH.Springer}.

Large pseudoeigenvalues
can be shown to exist by applying Theorem~\ref{Thm.DSZ}, as before. 
Employing the unitary transform~\eqref{unitary},
we introduce a semiclassical analogue of~$H$
via $\mathcal{U} H \mathcal{U}^{-1} = \sigma^2 H_h$, 
where~$H_h$ is~\eqref{Swanson.complete} 
with the small number~$h:= \sigma^{-2}$
put in front of the derivative.
For the symbol~$f$ associated with~$H_h$ we get
$$
  \{f,\bar{f}\} = 8 i (\alpha-\beta)
  \left[
  (\omega+\alpha+\beta) \, \xi^2 - (\omega-\alpha-\beta) \, x^2
  \right]
  \,.
$$
Consequently, $\Lambda$~has the same structure as that of the rotated harmonic oscillator:
a cone in the right complex half-plane with
two semiaxes removed.
Applying Theorem~\ref{Thm.DSZ}, we see that there is exponentially rapid resolvent growth in the interior of the cone.
Again, for any~$\eps > 0$ there are complex points with positive real part, 
non-zero imaginary part, and large magnitude 
that lie in the pseudospectrum $\sigma_\eps(H)$.

Although the straightforward application of Theorem~\ref{Thm.DSZ} already shows non-trivial character of the pseudospectrum, there is significantly more structure to be found. In fact, we show that $H$ is unitarily equivalent to a certain rotated harmonic oscillator, discussed in Section \ref{subsec.rot.ho}, and therefore information on the pseudospectrum and basis properties of eigenfunctions can be transferred directly to Swanson's model. Such a unitary equivalence is not a special property of this particular model; see Section \ref{subsec.el.q.op}.

In our particular case, a suitable unitary transform for $H$ reads
\begin{equation}\label{sw.U}
\begin{aligned}
\mathcal U &:= \mathcal U_1 \mathcal U_2 \mathcal U_3, \\
( \mathcal U_1 \psi)(x)&:= e^{-i \delta x^2 /2} \psi(x), 
\\
( \mathcal U_2 \psi)(x)&:= (\mathcal F^{-1} e^{-i \xi^2/(4 \delta)} \mathcal F\psi) (x),  
\\
( \mathcal U_3 \psi)(x)&:= (2 \delta)^{1/4} \psi((2\delta)^{1/2}x ), 
\end{aligned}
\end{equation}
where $\delta := (\omega+\alpha+\beta)^{1/2}(\omega-\alpha-\beta)^{-1/2}$ and $\mathcal F$ is the unitary Fourier transform 
\begin{equation}\label{eUnitary3}
\mathcal{F}u(\xi) = \frac{1}{\sqrt{2\pi}}\int_{\Real} e^{-ix\xi}\,u(x)\,\dd x, \quad u \in L^1(\Real) \cap L^2(\Real).
\end{equation}
The key steps are the relations for 
$\mathcal U_2^* \mathcal U_1^* x \mathcal U_1 \mathcal U_2$ 
and $\mathcal U_2^* \mathcal U_1^* p \mathcal U_1 \mathcal U_2$. Several straightforward manipulations and properties of $\mathcal F$ yield that for any $\varphi \in \mathscr{S}(\Real)$
\begin{equation}
\mathcal U_2^* \mathcal U_1^* x \mathcal U_1 \mathcal U_2 \varphi=
\left(x - \frac{i}{2 \delta} \frac{\dd}{\dd x}
\right)
\varphi,
\quad 
\mathcal U_2^* \mathcal U_1^* p \mathcal U_1 \mathcal U_2 \varphi= 
\left(
-\frac{i}{2} \frac{\dd}{\dd x} - \delta x
\right)
\varphi.
\end{equation}
The latter implies
\begin{equation}
\mathcal U_2^* \mathcal U_1^* H \mathcal U_1 \mathcal U_2 = \zeta  
\left(
-\frac{1}{2 \delta} \frac{\dd^2}{\dd x^2}  
+ 2\delta  \frac{\overline{\zeta}}{\zeta} x^2  
\right)
\end{equation}
with $\zeta := \sqrt{\omega^2 - (\alpha+\beta)^2} + i (\alpha -\beta)$. The additional rescaling $\mathcal{U}_3$ finally gives a multiple of the rotated harmonic oscillator:
\begin{equation}\label{Sw.reduct}
\mathcal U^* H \mathcal U = \zeta \left( -\frac{\dd^2}{\dd x^2}  
+ \frac{\overline{\zeta}}{\zeta} x^2  \right).
\end{equation}

The unitary equivalence shows that the eigenvectors of $H$ do not form a basis and the norms of the spectral projections grow as in \eqref{rot.osc.rate} with an appropriately chosen $\theta$.
Moreover, the numerically computed pseudospectra for~$H$ correspond to those for the rotated oscillator in Figure~\ref{Fig.HO-like} after the appropriate adjustment of parameters.
As in previous examples, the existence of non-trivial pseudospectra
makes~$H$ very different from any self-adjoint operator,
despite the reality of its spectrum 
and a formal similarity to a self-adjoint operator.
In particular, the spectrum is highly unstable 
under small perturbations.

\subsection{Elliptic quadratic operators}
\label{subsec.el.q.op}

To better understand and extend the reduction \eqref{Sw.reduct} applied to the gauged oscillator, we now discuss general operators which are quadratic in $(x,\dd/\dd x)$.  We begin with a quadratic symbol $q:\Real^2 \to \Com$,
\[
	q(x,\xi) := \alpha x^2 + 2\beta x\xi + \gamma \xi^2,
\]
where the $x$ variable represents the multiplication operator 
and the $\xi$ variable represents 
the self-adjoint momentum operator $-i \, \dd/\dd x$ 
defined on $W^{1,2}(\Real)$. 
Such a representation necessarily involves 
a choice for $x\xi$ between $x(\dd/\dd x)$ and $(\dd/\dd x)x$; 
in the quadratic case, the \emph{Weyl quantization} makes the choice
\[
	x\xi \mapsto 
\frac{1}{2}\left(x \Big(-i\frac{\dd}{\dd x} \Big) +  \Big(-i\frac{\dd}{\dd x} \Big) x\right).
\]
This choice ensures that real-valued $q$ lead to self-adjoint operators, in addition to other convenient properties.  (See, for instance, \cite[Chap.~7]{Dimassi-Sjostrand} or \cite[Sec.~18.5]{Hormander_3} for a far more general setting.)

We therefore arrive at the operator
\begin{equation}\label{eWeylDef}
	\begin{aligned}
	Q & := q^w(x,\xi) = \alpha x^2 -i \beta \left(x \frac{\dd}{\dd x}  + \frac{\dd}{\dd x} x \right) - \gamma \frac{\dd^2}{\dd x^2},
	\\ \Dom Q & := \{u \in \sii(\Real) \ \big| \ Qu \in L^2(\Real)\}.
	\end{aligned}
\end{equation}
The characterization of the domain as the graph closure of the restriction 
to $\mathscr{S}(\Real)$
or $C_0^\infty(\Real)$ 
may be found in \cite[pp.~425--426]{Hormander_1995}.

\paragraph{$\bullet$ Ellipticity}

It is natural to assume that $q(x,\xi)$ only vanishes at the origin:
\begin{equation}\label{eqEll1}
 	q(x,\xi) = 0 \implies (x,\xi) = (0,0).
\end{equation}
However, this is not sufficient to rule out degenerate behaviour of $Q$, and so one adds the assumption that
\begin{equation}\label{eqEll2}
 	q(\Real^2) \neq \Com.
\end{equation}
These conditions together assure us that there exists some nonzero complex number $\mu \in \Com$ for which 
\begin{equation}\label{rotated.ellipticity}
	\Re(\mu q(x,\xi)) \geq |(x,\xi)|^2
\end{equation}
and thus $\Re(\mu Q) = \frac{1}{2}(\mu Q + \overline{\mu}Q^*)$ acts like a harmonic oscillator.

If \eqref{eqEll1} holds but \eqref{eqEll2} fails, 
we find ourselves in a situation resembling 
that of a creation or annihilation operator \eqref{creation} squared: either 
$\dim \Ker(Q-z) = 2$ for all $z \in \Com$
or 
$\dim \Ran(Q-z)^\perp = 2$ 
for all $z \in \Com$; see \cite[Sec.~3.1]{Pravda-Starov_2008}. 
 This is precisely the situation which arises for the gauged oscillator, Section \ref{subsec.Swanson}, when~\eqref{Swanson.cond1} holds but~\eqref{Swanson.cond2} fails.

Under \eqref{eqEll1} and \eqref{eqEll2}, the spectral theory of the operator $Q$ can be deduced from the spectral theory of the matrix sometimes called the \emph{fundamental matrix}:
\begin{equation}\label{eFundamentalMatrix}
	F := \left(\begin{array}{cc} \beta & \gamma \\ -\alpha & -\beta\end{array}\right).
\end{equation}
It is shown in \cite[Prop.~3.3]{Sjostrand_1974} that
\begin{equation}\label{eQuadEigensystem}
	\begin{aligned}
	\sigma(F) & = \pm \lambda,
	\\ \Ker(F\mp \lambda) & = \operatorname{span}\left\{(1, a_\pm)\right\}, \quad & \pm \Im a_\pm > 0.
	\end{aligned}
\end{equation}
We note that, in dimension 1, we can identify $\mu = i/\lambda$ 
in \eqref{rotated.ellipticity} through choosing~$\lambda$ 
according to the signs of $\Im a_\pm$.  Taking this notation into account, we then have from \cite[Thm.~3.5]{Sjostrand_1974} that
\[
	\sigma(Q) = \big\{-i\lambda(2k+1) \ \big| \ k = 0, 1, 2, \dots\big\}.
\]

\paragraph{$\bullet$ Linear symplectic transformations}

One advantage of the Weyl quantization is that we may transform our symbols by composition with symplectic transformations; 
see \eg~\cite[Sec.~18.5]{Hormander_3} for a detailed accounting of the theory. In our simplified (linear, dimension one) setting, the set of real linear symplectic transformations is simply the set of a 2-by-2 matrices with real entries and determinant equal to one.  Any such matrix may be written \cite[Lem.~18.5.8]{Hormander_3} as a composition of matrices of the form
\begin{equation}\label{eCanonicalGenerators}
	G_b := \left(\begin{array}{cc} 1 & 0 \\ b & 1 \end{array}\right), \quad V_c := \left(\begin{array}{cc} c & 0 \\ 0 & 1/c \end{array}\right), \quad J := \left(\begin{array}{cc} 0 & 1 \\ -1 & 0\end{array}\right).
\end{equation}
with $b,c \in \Real$ and $c \neq 0$.

What is more, one may transform a quadratic symbol by composing with such a matrix by conjugating with an easily-understood unitary transformation on $L^2(\Real)$.  Specifically, we use multiplication by a complex Gaussian
\begin{equation}\label{eUnitary1}
	\mathcal{G}_bu(x) := e^{ibx^2/2}u(x),
\end{equation}
a scaling change of variables,
\begin{equation}\label{eUnitary2}
	\mathcal{V}_cu(x) := c^{-1/2}u(x/c),
\end{equation}
and the unitary Fourier transform~\eqref{eUnitary3}.
Using definition \eqref{eWeylDef}, it is then straightforward to check that
\[
	\begin{aligned}
		\mathcal{G}_b^* q^w(x,\xi)\mathcal{G}_b 
& = (q\circ G_b)^w(x,\xi),
		\\ \mathcal{V}_c^* q^w(x,\xi)\mathcal{V}_c 
& = (q\circ V_c)^w(x,\xi),
		\\ \mathcal{F}^* q^w(x,\xi)\mathcal{F} 
& = (q\circ J)^w(x,\xi).
	\end{aligned}
\]

One important example is the change of variables which gives us the correspondence between high-energy and semiclassical limits: for any $z \in \Com$,
\begin{equation}\label{eSemiclassicalScaling}
	q^w(x,\xi)-z = \mathcal{V}_{\sqrt{h}}^*\left(\frac{1}{h}(q^w(x,h \xi) - hz)\right)\mathcal{V}_{\sqrt{h}}.
\end{equation}
Therefore the regime with spectral parameter $rz$ as $r \to \infty$ for the operator $q^w(x,\xi)$ is unitarily equivalent to the regime with spectral parameter $z$ fixed for the operator $\frac{1}{h}q^w(x,h\xi)$ as $h = 1/r \to 0^+$.

\paragraph{$\bullet$ Reduction to rotated harmonic oscillator}

In \cite[Lem.~2.1]{Pravda-Starov_2007}, Pravda-Starov identifies a procedure for taking an elliptic quadratic form $q$ and finding $\mu \in \Com \backslash \{0\}$ and a real matrix $T$ with $\det T = 1$ for which
\begin{equation}\label{eKarelReduction}
	(q\circ T)(x,\xi) = \mu\left((1+i\lambda_1)x^2 + (1+i\lambda_2)\xi^2\right)
\end{equation}
for $\lambda_1, \lambda_2 \in \Real$. Applying a scaling like \eqref{eSemiclassicalScaling} and scaling $\mu$ allows us to assume that the coefficients of $x^2$ and $\xi^2$ have the same modulus.  It is then evident that the resulting symbol is a multiple of that of a rotated harmonic oscillator \eqref{HO.rotated}.

If one wishes only to identify the parameters of the rotated harmonic oscillator involved, one may appeal to the spectral decomposition of the fundamental matrix and the growth of the spectral projections. An application of Corollary~1.7 in \cite{Viola_2013} in terms of the eigensystem \eqref{eQuadEigensystem} shows that the norm of the spectral projections $P_k$ for the eigenvalues $(2k+1)\lambda/i$ obey the asymptotics
\[
 	\lim_{k\to\infty}\frac{\log \|P_k\|}{k} = \frac{1}{2}\log \frac{1+|c_+|}{1-|c_+|}, \quad c_+ = -\frac{a_+ - \overline{a_-}}{a_+ - a_-}.
\]
From \eqref{rot.osc.rate}, we see that this uniquely identifies a rotated harmonic oscillator with $\theta \geq 0$.  The multiplicative factor, in turn, is determined by the ground state energy, that is, the eigenvalue corresponding to $k = 0$.  We arrive at the following proposition.

\begin{Proposition}\label{Prop.QuadraticIdentify}
 	Let $Q$ be any quadratic operator as in \eqref{eWeylDef} with symbol $q$.  Assume that $q$ is elliptic as in \eqref{eqEll1} and \eqref{eqEll2}, and therefore let the eigensystem of the fundamental matrix of $q$ be as in \eqref{eQuadEigensystem}.  Let $\theta \in [0, \pi/2)$ be determined by
	\[
	 	\sin \theta = \left|\frac{a_+ - \overline{a_-}}{a_+ - a_-}\right|.
	\]
	
	Then $Q$, as an unbounded operator on $L^2(\Real)$, is unitarily equivalent to
	\[
	 	\lambda \left(-e^{-i\theta}\frac{\dd^2}{\dd x^2} + e^{i\theta}x^2\right).
	\]
\end{Proposition}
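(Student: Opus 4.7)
My plan is to prove Proposition~\ref{Prop.QuadraticIdentify} in two stages: first, reduce $Q$ by an explicit unitary transformation to the form $\lambda'\bigl(-e^{-i\theta'}\,\dd^2/\dd x^2 + e^{i\theta'}x^2\bigr)$ for some $\lambda' \in \Com\setminus\{0\}$ and $\theta' \in [0,\pi/2)$; second, identify $\lambda'=\lambda$ and $\theta'=\theta$ with the quantities appearing in the statement by invoking unitary invariants, namely the ground state eigenvalue and the exponential growth rate of spectral projections.

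For the first stage, I would apply the Pravda-Starov normal form \eqref{eKarelReduction}: there exist $\mu \in \Com\setminus\{0\}$, $\lambda_1,\lambda_2 \in \Real$, and a real symplectic $T \in \mathrm{SL}(2,\Real)$ such that $q\circ T(x,\xi) = \mu\bigl((1+i\lambda_1)x^2 + (1+i\lambda_2)\xi^2\bigr)$. By the decomposition of $\mathrm{SL}(2,\Real)$ into the generators $G_b$, $V_c$, $J$ recorded in \eqref{eCanonicalGenerators}, and the corresponding metaplectic identities realized by $\mathcal{G}_b$, $\mathcal{V}_c$, $\mathcal{F}$, the operator $Q = q^w$ is unitarily equivalent to $(q\circ T)^w$. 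Weyl-quantizing the normal form gives
\[
(q\circ T)^w = \mu\bigl((1+i\lambda_1)x^2 - (1+i\lambda_2)\tfrac{\dd^2}{\dd x^2}\bigr).
\]
Under the ellipticity assumptions \eqref{eqEll1}--\eqref{eqEll2}, the two coefficients lie in a common open half-plane, so one can choose a fourth root $c = \bigl((1+i\lambda_2)/(1+i\lambda_1)\bigr)^{1/4} \in \Com$ and apply a (complex) scaling implemented by $\mathcal{V}_c$ (or, equivalently, first a real scaling $\mathcal{V}_{|c|}$ and then absorb the remaining phase into a factored constant $\lambda'$) to balance the two coefficients in modulus. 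Factoring out the common modulus and a phase yields the desired form $\lambda'\bigl(-e^{-i\theta'}\,\dd^2/\dd x^2 + e^{i\theta'}x^2\bigr)$ with $\theta' \in [0,\pi/2)$.

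For the second stage, I use unitary invariants to pin down $\lambda'$ and $\theta'$. The spectrum of $Q$ is $\{-i\lambda(2k+1)\}_{k\ge 0}$ by \cite[Thm.~3.5]{Sjostrand_1974}, while by Section~\ref{subsec.rot.ho} the spectrum of $\lambda'\bigl(-e^{-i\theta'}\,\dd^2/\dd x^2 + e^{i\theta'}x^2\bigr)$ is $\{\lambda'(2k+1)\}_{k\ge 0}$. Matching ground state energies (the $k=0$ eigenvalue) forces $\lambda' = \lambda$ (with the correct choice of the sign of $\lambda$, dictated by the sign convention $\pm \Im a_\pm > 0$ in \eqref{eQuadEigensystem}). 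To identify $\theta'$, I invoke \cite[Cor.~1.7]{Viola_2013}, which expresses the exponential rate of growth of the spectral projections $P_k$ of $Q$ in terms of $a_\pm$ as
\[
\lim_{k\to\infty}\frac{\log\|P_k\|}{k} = \frac{1}{2}\log\frac{1+|c_+|}{1-|c_+|}, \qquad c_+ = -\frac{a_+ - \overline{a_-}}{a_+ - a_-},
\]
while formula \eqref{rot.osc.rate} gives the corresponding rate for the rotated oscillator as $\tfrac{1}{2}\log\bigl((1+|\sin\theta'|)/(1-|\sin\theta'|)\bigr)$. Since the function $s\mapsto \tfrac{1}{2}\log\bigl((1+s)/(1-s)\bigr)$ is strictly monotone on $[0,1)$, equating the two rates forces $|\sin\theta'| = |c_+| = \bigl|(a_+ - \overline{a_-})/(a_+ - a_-)\bigr| = \sin\theta$, and, since $\theta, \theta' \in [0,\pi/2)$, one concludes $\theta' = \theta$.

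The main obstacle is the bookkeeping in the first stage: one must verify that the (possibly complex) scaling needed to balance $(1+i\lambda_1)$ against $(1+i\lambda_2)$ is implementable by a genuinely unitary transformation, rather than by an unbounded similarity (which would defeat the purpose). The ellipticity conditions \eqref{eqEll1}--\eqref{eqEll2} are precisely what guarantee that both coefficients sit on the same side of the real line (the sign being fixed by the choice of $\mu$ via \eqref{rotated.ellipticity}), so that a real $\mathcal{V}_{|c|}$ combined with a global scalar factor suffices; the remaining ``rotation'' is absorbed into the constant $\lambda'$ in front of the normal-form operator. Once this is handled cleanly, the identification of parameters by invariants is immediate from the quoted results.
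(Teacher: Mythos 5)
Your proposal follows essentially the same route as the paper: both reduce $Q$ to a rotated harmonic oscillator using the Pravda-Starov normal form \eqref{eKarelReduction} together with the metaplectic realizations of $G_b$, $V_c$, $J$ (and a real scaling to balance the moduli of the two coefficients, absorbing the leftover phase into the scalar prefactor), and both then pin down $\theta$ via the exponential growth rate of spectral projections from \cite[Cor.~1.7]{Viola_2013} matched against \eqref{rot.osc.rate}, and $\lambda$ via the ground-state eigenvalue. Your explicit observation that the complex fourth-root scaling must be replaced by a real $\mathcal{V}_{|c|}$ to stay unitary is exactly the point the paper is handling implicitly when it speaks of ``scaling $\mu$'' after a real scaling like \eqref{eSemiclassicalScaling}, so the argument is correct and no new ideas are introduced.
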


\paragraph{$\bullet$ Higher dimension}

The extension of the spectral and pseudospectral theory to elliptic quadratic operators in higher dimension is well-developed but not complete.  We content ourselves with a brief description and references.

The Weyl quantization of a quadratic form 
\[
	q(x,\xi): \Real^{2d} \to \Com
\]
in dimension $d \geq 2$ also associates the variable $x_j$ with multiplication by $x_j$, associates the variable $\xi_j$ with $-i\partial/\partial x_j$, and resolves the problem of commutativity (in the quadratic case) by taking an average:
\[
	(x_j\xi_j)^w u(x) = \frac{1}{2i}\left(x_j\frac{\partial}{\partial x_j}u(x) + \frac{\partial}{\partial x_j}(x_j u(x))\right).
\]
The ellipticity hypothesis is simpler, since in dimension 2 or greater, \eqref{eqEll1} implies~\eqref{eqEll2}, shown in \cite[Lem.~3.1]{Sjostrand_1974}.

Under the assumption \eqref{eqEll1}, the spectrum of $Q = q^w(x,\xi)$ is a lattice determined by eigenvalues of the matrix corresponding to the fundamental matrix \eqref{eFundamentalMatrix}; the formula is given in \cite[Thm.~3.5]{Sjostrand_1974}.

It was recently shown in \cite{Caliceti-Graffi-Hitrik-Sjostrand_2012} that, 
regardless of dimension, an elliptic quad\-rat\-ic form obeying 
a $\PT$-symmetry condition is \emph{formally} similiar to 
a self-adjoint operator if and only if the spectrum is real 
and the fundamental matrix is diagonalizable.  
This similarity is only enacted eigenspace by eigenspace, and the authors observe that the pseudospectral considerations prevent the similarity transformations from being bounded with bounded inverse on $L^2(\Real^d)$.

In \cite{Pravda-Starov_2008}, Pravda-Starov conducts a complete study of the semiclassical pseudospectrum for non-normal elliptic quadratic operators.  It is shown in \cite[Sec.~3.2]{Pravda-Starov_2008} that, for a non-normal operator, the bracket condition is violated everywhere in the interior of the range of the symbol and therefore exponentially accurate pseudomodes exist.

Conversely, exponential upper bounds for the resolvent are proven in \cite{Hitrik-Sjostrand-Viola_2013}, except that exponential growth $C^{1/h}$ may need to be replaced by the more rapid growth $(C/h)^{C/h}$ when the fundamental matrix contains Jordan blocks.

Finally, the associated spectral projections for a non-normal quadratic operator were shown in \cite{Viola_2013} to usually increase at an exponential rate, though there are degenerate situations such as when Jordan blocks are present in the fundamental matrix.

\subsection{Numerical computation of JWKB solutions}
\label{subsec.numer.wkb}

The proof of Theorem~\ref{Thm.DSZ} proceeds by creating pseudomodes as JWKB (Jeffreys-Wentzel-Kramers-Brillouin) approximations for which $(H-z)u \approx 0$.  Focusing on the one-dimensional Schr\"odinger case, these functions can be expressed using a few elementary operations, including differentiation and integration; see \eqref{WKB.Full}, \eqref{WKB.Phase}, \eqref{WKB.a0}, and \eqref{WKB.aj}.  The software package Chebfun \cite{Chebfun} allows us to easily compute these pseudomodes with high accuracy.

In Figure \ref{Fig.Modes.RHO} we compare a JWKB pseudomode for the semiclassical rotated harmonic oscillator
\begin{equation}\label{numerics.rot.ho}
	H_h = -h^2e^{-i\pi/4}\frac{\dd^2}{\dd x^2} + e^{i\pi/4}x^2
\end{equation}
discussed in Section \ref{subsec.rot.ho} with $z = e^{-i\pi/4}(1/2+i)$, $h = 2^{-5}$, and 
\[
	u(x;h) = \chi(x)e^{i\varphi(x)/h}\sum_{j=0}^6 h^j a_j(x).
\]
The plot on the left is of the real part (red) and the imaginary part (blue) of the pseudomode, and the plot on the right is of the image, $(H_h-z)u$.  One may compute that
\[
	\frac{\|(H_h-z)u\|}{\|u\|} \approx 2.5041 \times 10^{-4}.
\]
One can clearly see the contributions from the gradient of the support of the cutoff function, which is $[0.2,0.4] \cup [1.6,1.8]$.
\begin{figure}
\begin{center}
\includegraphics[width=0.99\textwidth]{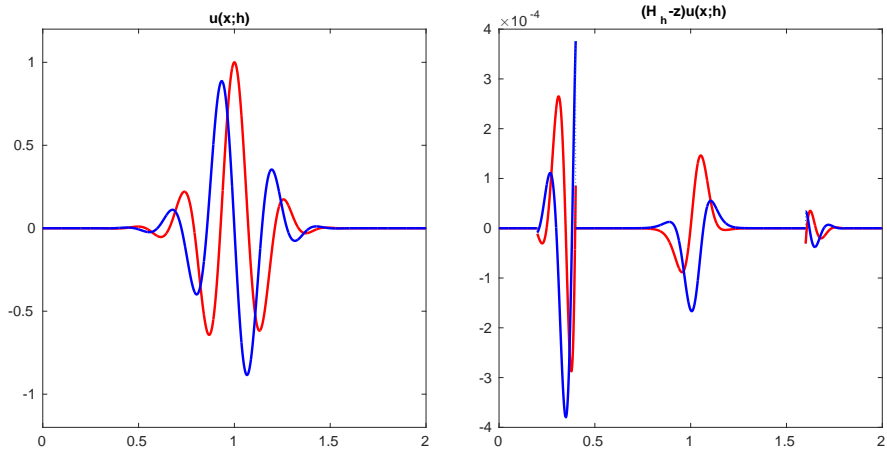}
\end{center}
\caption{Pseudomode (left) and image (right) for semiclassical rotated harmonic oscillator. The red curve is the real part, and the blue the imaginary.}\label{Fig.Modes.RHO}
\end{figure}

If one studies instead the semiclassical rescaling of the shifted harmonic oscillator
\begin{equation}\label{numerics.shift.ho}
	H_h = -h^2\frac{\dd^2}{\dd x^2} + x^2 + 2ih^{1/2}x - h
\end{equation}
discussed in Section \ref{subsec.shift.ho} we have noticeable but less accurate pseudomodes, with the principal error given on the support of the cutoff function.  In Figure \ref{Fig.Modes.SHO}, one has a similar JWKB solution and its image with $h = 2^{-8}$, $z = 2-h+2ih^{1/2}$, and 
\[
	u(x;h) = \chi(x)e^{i\varphi(x)/h}\sum_{j=0}^2 h^j a_j(x).
\]
(We may see numerically that there is practically no difference in norms when taking one, two, or ten terms in the expansion.)  Since $h$ is very small, the JWKB function oscillates quite rapidly, and since the decay of $e^{i\varphi/h}$ is comparatively weak, 
the principal error comes from the cutoff function, whose gradient is again supported on $[0.2,0.4] \cup [1.6,1.8]$. We have here
\[
	 \frac{\|(H_h-z)u\|}{\|u\|} \approx 2.0290 \times 10^{-3}.
\]
\begin{figure}[h!]
\begin{center}
\includegraphics[width=0.99\textwidth]{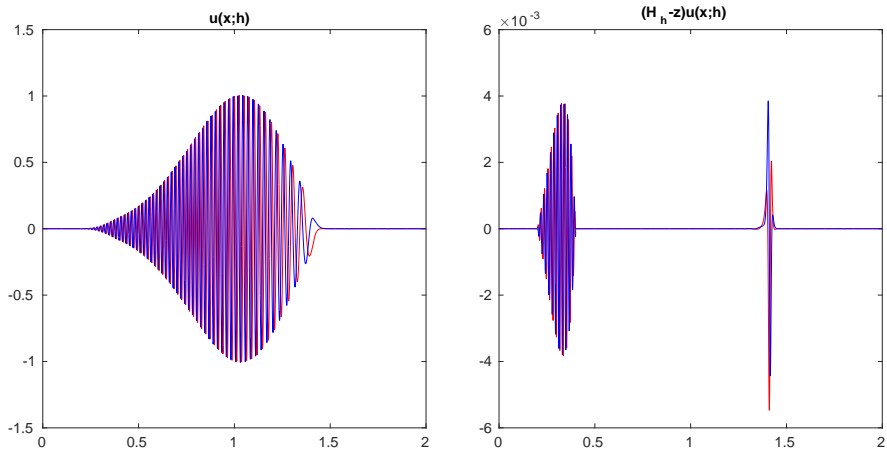}
\end{center}
\caption{Pseudomode (left) and image (right) for semiclassical rescaling of shifted harmonic oscillator. The red curve is the real part, and the blue the imaginary.}\label{Fig.Modes.SHO}
\end{figure}

We can then compare the accuracy of the $\sii(\Real)$-normalized pseudomodes by plotting $\|(H_h-z)u\|$ versus $1/h$ for a variety of $h$, presented in Figure \ref{Fig.Accuracy.Compare}. On the left, we have the norms for the semiclassical rotated harmonic oscillator \eqref{numerics.rot.ho} at $z = 1+4i$, and on the right, we have those for the rescaled shifted harmonic oscillator \eqref{numerics.shift.ho} at $z = 2-h+2ih^{1/2}$. We can observe that the norm ratios for the pseudomodes for the rotated harmonic oscillator decrease like $\exp(-c/h)$ while those for the shifted harmonic oscillator decrease more slowly.

\begin{figure}[h!]
\begin{center}
\includegraphics[width=0.99\textwidth]{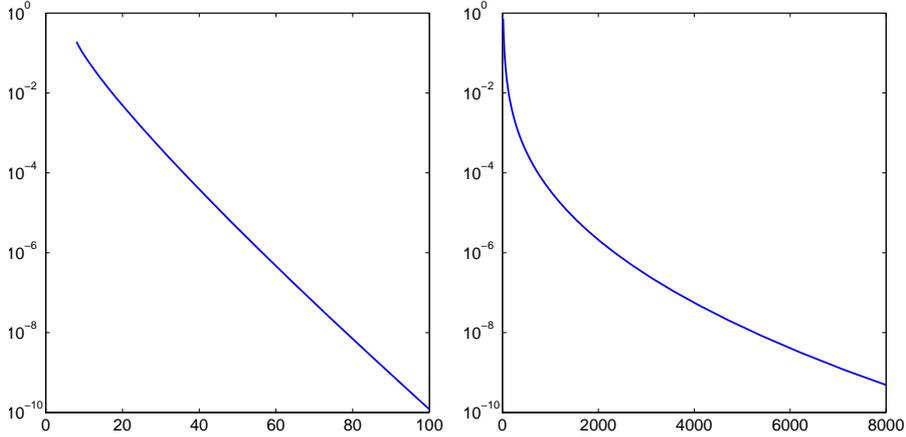}
%\fbox{\dots}
\end{center}
\caption{$\|(H_h-z)u\|$ as a function of $1/h$ for normalized pseudomodes for rotated harmonic oscillator (left) and shifted harmonic oscillator (right).}\label{Fig.Accuracy.Compare}
\end{figure}

\appendix
\section{Existence proofs for pseudomodes}\label{App.proofs}

For the interested reader, we include detailed proofs of Theorem~\ref{Thm.DSZ} in the case of a Schr\"odinger operator and of Theorem~\ref{Thm.Shifted}.  In these proofs, the constant $C>0$ may change from line to line.  Furthermore, we understand semiclassical statements involving $h$ to only hold for $h \in (0, h_0]$ for some $h_0 > 0$; so long as $h_0$ changes only finitely many times in the proof, we are allowed to make conclusions ``for $h$ sufficiently small'' in our theorems.

%\begin{proof}[Proof of special case of Theorem~\ref{Thm.DSZ}]
\subsection{Proof of special case of Theorem~\ref{Thm.DSZ}}
We restrict our attention to the case
\[
H_h = -h^2\frac{\dd^2}{\dd x^2} + V(x).
\]
The symbol of $H_h$ is
\begin{equation}\label{Symbol.Schro}
	f(x,\xi) = \xi^2 + V(x),
\end{equation}
and
\[
	\frac{1}{2i}\left\{f,\bar{f}\right\} = -2\xi\Im V'(x).
\]
Therefore $z \in \Lambda$, defined in \eqref{semiclass.pseudospec}, if and only if there exists $(x_0, \xi_0) \in \Real^2$ with $z = \xi_0^2 + V(x_0)$, $\Im V'(x_0) \neq 0$, and $-\xi_0 \Im V'(x_0) > 0$.  Equivalently, since we may choose the sign of $\xi_0$, there exists some $x_0 \in \Real$ where $\Im V'(x_0) \neq 0$ and $z-V(x_0)$ is a positive real number.
We only need to assume that $V(x)$ is analytic in a neighborhood of $x_0$.

After a translation, we may assume that $x_0 = 0$.  We seek a JWKB approximation (see for instance \cite[Chap.~2]{Dimassi-Sjostrand}) to a solution of $(H_h-z)u = 0$ of the form
\begin{equation}\label{WKB.Full}
	u(x;h) = e^{i\varphi(x)/h}\sum_{j=0}^{N(h)} h^j a_j(x)
\end{equation}
for $a_j(x)$ analytic near $x_0 = 0$. The strategy is to choose the phase function such that conjugation by the multiplication operator $e^{-i\varphi(x)/h}$ reduces $H_h$ to a transport equation plus an error in $h$. The functions $a_j$ may be found iteratively and then $N(h)$ may be chosen to give an accurate local solution. The quasimode will then be obtained by multiplying $u(x;h)$ by a fixed cutoff function $\chi$ localizing to a neighborhood of $x_0=0$.  An important difference making the JWKB theory for non-self-adjoint operators somewhat simpler is that the phase function $\varphi(x)$ has a significant imaginary part. This allows for multiplication by cutoff functions with small errors, a technique which is generally not available for self-adjoint operators where $\varphi$ is real-valued. 

We require that the phase function $\varphi(x)$ satisfies the eikonal equation
	\[
	 f(x,\varphi'(x))-z = 0
	\]
	for $f$ from \eqref{Symbol.Schro}.
	Clearly this implies that $\varphi'(x) = \pm\sqrt{z-V(x)}$.
Since $z-V(0) > 0$, this function is analytic in a neighborhood of $0 \in \Com$.

	We allow the sign to be determined by the bracket condition
	\begin{equation}\label{bracket.schro}
	 	\frac{1}{2i}\{f,\bar{f}\}(x,\xi) = -2 \Im V'(x)\xi> 0.
	\end{equation}
	Applying this to $(x,\xi) = (0, \varphi'(0))$ indicates that the sign of $\varphi'(0)$ should be taken to be the opposite of the sign of $\Im V'(0)$.  Alternately, the importance of this choice of sign may be seen by observing that 
	\begin{equation}\label{e.ddphi}
	 	\varphi''(x) = -\frac{V'(x)}{2 \varphi'(x)}
	\end{equation}
	and thus our choice is made so that $\Im \varphi''(0) > 0$, which means that $e^{i\varphi(x)/h}$ has rapid decay away from $x = 0$.  We arrive at the formula
\begin{equation}\label{WKB.Phase}
	\varphi(x) = -\operatorname{sgn}(\Im V'(x_0))\int_0^x\sqrt{z-V(y)}\,\dd y .
\end{equation}
	We may then check that
	\[
	 	e^{-i\varphi/h}(H_h-z)e^{i\varphi/h} = \frac{2h}{i}(\varphi'\frac{\dd}{\dd x}+\frac{1}{2}\varphi'')-h^2 \frac{\dd^2}{\dd x^2}.
	\]
	So long as $\{a_j\}_{j=0}^\infty$ satisfy the transport equations
	\[
		\varphi'(x)a_0'(x) + \frac{1}{2}\varphi''(x) a_0(x) = 0
	\]
	and
	\[
	 	\varphi'(x)a_j'(x) + \frac{1}{2}\varphi''(x) a_j(x) = \frac{i}{2}a_{j-1}''(x), \quad j=1, 2, \dots,
	\]
	we have
	\begin{equation}\label{eWKBOutput}
		e^{-i\varphi/h}(H_h-z)e^{i\varphi/h}\left(\sum_{j=0}^N h^j a_j\right) = -h^{N+2}a_N''.
	\end{equation}
	We are free to choose $a_0(0) = 1$ and $a_j(0) = 0$ for all $j > 0$.  Using the integrating factor $\exp(\int_0^x \varphi''(y)/(2\varphi'(y))\,\dd y) = C\sqrt{\varphi'(x)}$ immediately gives that
	\begin{equation}\label{WKB.a0}
	 	a_0(x) = \frac{\sqrt{\varphi'(0)}}{\sqrt{\varphi'(x)}}
	\end{equation}
	and that, for $j > 0$,
	\begin{equation}\label{WKB.aj}
	 	a_{j+1}(x) = \frac{1}{\sqrt{\varphi'(x)}}\int_0^x \frac{i a_j''(y)}{2\sqrt{\varphi'(y)}}\,dy.
	\end{equation}
	We note that, in a sufficiently small neighborhood of zero in the complex plane, $\varphi'$ may be extended to an analytic function which is bounded away from zero, and therefore each $a_j$ is certainly analytic on that neighborhood of zero.

	We now consider bounds on the functions $a_j$.  As in Example 1.1 of \cite{Sjostrand_SAM}, we will show that the $a_j$ obey the estimates
	\begin{equation}\label{ejToThej}
	 	|a_j(z)| \leq C_1^{j+1}j^j
	\end{equation}
	for some $C_1 > 0$ and all $z$ in a neighborhood of the origin. A sequence of functions satisfying these estimates is said to be a \emph{formal analytic symbol}. Once these bounds are established, we may define the $h$-dependent function
	\begin{equation}\label{eaDef}
	 	a(z;h) = \sum_{0 \leq j \leq (eC_1h)^{-1}} h^ja_j(z),
	\end{equation}
	summing over a collection of $j$ chosen such that
	\begin{equation}\label{eeToThej}
	 	|h^ja_j(z)| \leq C_1(C_1hj)^j \leq C_1 e^{-j}.
	\end{equation}
	Since $\{e^{-j}\}_{j\geq 0}$ is summable, we will therefore have that $\{a(z;h)\}_{0< h \leq h_0}$ is a uniformly bounded collection of analytic functions on the set where \eqref{ejToThej} holds.

	The natural norm to use here for analytic functions is the supremum norm,
so for $K \subseteq \Com$ we write
	\[   
	 	\|g\|_{K} = \sup_{z \in K} |g(z)|.
	\]
	For balls in the complex plane centered at zero, we use the notation
	\[
	 	B(R) = \left\{z \in \Com \ \big| \ |z| < R\right\}.
	\]
	Fix $R_0 > 0$ such that, on $B(R_0)$, the phase function $\varphi$ is analytic, the modulus of the derivative $|\varphi'|$ is bounded from above and below, and $\Im \varphi''(x) > 1/C$ for some $C > 0$.

	Cauchy's estimates for the second derivative of a analytic bounded function $g$ defined on $B(R)$ read
	\begin{equation}\label{eCauchy2nd}
	 	|g''(z)| \leq \frac{2\|g\|_{B(R)}}{(R-|z|)^{2}}.
	\end{equation}
	We integrate the estimates applied to $a_j''$ to obtain bounds for $a_{j+1}$:
	\begin{equation}\label{eIntegrateCauchy}
		\begin{aligned}
	 	|a_{j+1}(z)| &= \left|\frac{1}{\sqrt{\varphi'(z)}}\int_0^z \frac{i a_j''(\zeta)}{2\sqrt{\varphi'(\zeta)}}\,d\zeta\right| 
		\\ & \leq \|(\varphi')^{-1}\|_{B(R)} \int_0^{|z|} \frac{\|a_j\|_{B(R)}}{(R-t)^2}\,\dd t
		\\ & \leq \|(\varphi')^{-1}\|_{B(R)}\|a_j\|_{B(R)}\left(\frac{1}{R-|z|}-\frac{1}{R}\right) 
		\\ & = \frac{|z|}{R(R-|z|)}\|(\varphi')^{-1}\|_{B(R)}\|a_j\|_{B(R)}.
		\end{aligned}
	\end{equation}
	The estimate for $|a_{j+1}(z)|$ is stronger than the usual Cauchy's estimate for the first derivative when $z$ is near zero, which corresponds to having set $a_{j+1}(0) = 0$.

	To obtain the estimates \eqref{ejToThej} on $B(R_0/2)$, we fix $j > 0$ and iterate \eqref{eIntegrateCauchy} on balls of radius
	\[
	 	R_k = \left(1-\frac{k}{2j}\right)R_0, \quad k = 0,\dots,j-1.
	\]
	When $|z| \leq R_{k+1}$, we have
	\[
		\frac{|z|}{R_k(R_k-|z|)} \leq \frac{|z|}{R_k(R_k - R_{k+1})} = \frac{|z|}{(R_0/2)(R_0/2j)} \leq \frac{4j|z|}{R_0^2},
	\]
	since $R_k > R_0/2$ when $k < j$.  Therefore we may bound $a_{k+1}$ on the disc of radius $R_{k+1}$ using a bound for $a_k$ on the disc of radius $R_k$ and \eqref{eIntegrateCauchy}:
	\begin{equation}\label{eIteratedEstimate}
		\begin{aligned}
	 	\|a_{k+1}\|_{B(R_{k+1})} & \leq \frac{|z|}{R_k(R_k-|z|)}\|(\varphi')^{-1}\|_{B(R_k)}\|a_k\|_{B(R_k)} 
		\\ & \leq \frac{4j|z|}{R_0^2}\|(\varphi')^{-1}\|_{B(R_0)}\|a_k\|_{B(R_k)}.
		\end{aligned}
	\end{equation}
	Therefore, when $j > 0$, we take the product of the estimates \eqref{eIteratedEstimate} for $k=0,\dots,j-1$ to obtain
	\begin{equation}\label{eModz}
	 	\|a_j\|_{B(R_0/2)} \leq \|a_0\|_{B(R_0)}\left(C_2 |z| j\right)^j, \quad C_2 = \frac{4}{R_0^2}\|(\varphi')^{-1}\|_{B(R_0)}.
	\end{equation}
	The estimate \eqref{ejToThej} on $B(R_0/2)$ immediately follows, with
	\begin{equation}\label{eC1phiprime}
	 	C_1 = \max\left(\|a_0\|_{B(R_0)}, \frac{2}{R_0}\|(\varphi')^{-1}\|_{B(R_0)}\right).
	\end{equation}

	Having established estimates for $a(z;h)$ when $z \in B(R_0/2)$, let 
$\chi \in C_0^\infty(\Real)$ 
be equal to one in a neighborhood of $0 \in \Real$ and have support in a compact subset of the interval $(-R_0/2, R_0/2)$.  We then define our pseudomode as
	\[
	 	u(x;h) = e^{i\varphi(x)/h}\chi(x)a(x;h),
	\]
	with $a(x;h)$ defined in \eqref{eaDef}.
	
	We then estimate the $L^2(\Real)$ norm
	\begin{equation}\label{ePuTriangle}
		\|H_h-z)u(x;h)\| \leq \|\chi (H_h-z)e^{i\varphi/h}a\| + \|[(H_h-z),\chi]e^{i\varphi/h}a\|
	\end{equation}
	as follows.  First, we recall that we have chosen $R_0$ such that $\Im \varphi''(x) > 1/C_3$ for some $C_3 > 0$.  Since $\varphi(0) = 0$ and $\varphi'(0)$ is real, we therefore have that
	\begin{equation}\label{eDecaysuppchi}
	 	|e^{i\varphi(x)/h}| \leq \exp\left(-\frac{1}{2C_3h}x^2\right), \quad \forall x \in \operatorname{supp}\chi.
	\end{equation}
	Since $|e^{-i\varphi/h}| \geq 1$ on $\operatorname{supp}\chi$, we may multiply by $e^{-i\varphi/h}$ and use \eqref{eWKBOutput} to obtain
	\[
		\|\chi H_h e^{i\varphi/h}a\| \leq \|\chi e^{-i\varphi/h} H_h e^{i\varphi/h}a\| = \|h^{N+2}a_N''(x)\chi(x)\|,
	\]
	with $N = N(h) = \lfloor(eCh)^{-1}\rfloor$.  Cauchy's estimates \eqref{eCauchy2nd} along with \eqref{eeToThej} show that $h^{N+2}a_N''(x) \leq Ce^{-1/(Ch)}$ for $C > 0$ independent of $h$ and all $x \in \operatorname{supp}\chi$. We therefore have, for some $C > 0$, the estimate
	\[
		\|\chi H_h e^{i\varphi/h}a\| \leq Ce^{-1/(Ch)}.
	\]

	In the commutator of $(H_h-z)$ and $\chi$, only the derivatives in $H_h$ play a role.  We compute 
	\begin{equation}\label{WKB.Commutator}
		\begin{aligned}
	 		\left[(H_h-z), \chi \right] e^{i\varphi/h}a &= \left[-h^2\frac{\dd^2}{\dd x^2},\chi\right]e^{i\varphi/h}a 
			\\ 
			& = -h^2e^{i\varphi/h}\left(\chi'' a + 2\chi'\left(a' + \frac{i\varphi'}{h}a\right)\right).
		\end{aligned}
	\end{equation}
	On $\operatorname{supp}(\chi)$, we have uniform bounds on $a$ by \eqref{eeToThej} and therefore on $a'$ by Cauchy's estimates.  As before, $\varphi'$ is controlled by the choice of $R_0$.  Exponential decay comes from the fact that $\operatorname{supp}(\chi')$ avoids a neighborhood of $0$: by \eqref{eDecaysuppchi}, we have that 
	\begin{equation}\label{eDecaygradchi}
		|e^{i\varphi(x)/h}| \leq e^{-1/(Ch)}, \quad \forall x \in \operatorname{supp}(\chi').
	\end{equation}
	Therefore the second term in \eqref{ePuTriangle} is also exponentially small in $1/h$.

	Having proven that both terms in \eqref{ePuTriangle} are exponentially small, the proof is complete upon showing that $u(x;h)$ is not exponentially small.  Intuitively, this is clear from the choice of $\varphi$ and that $a_0(0) = 1$ and $a_j(0) = 0$ for $j > 0$, from which we know that $u(x;h)$ resembles $e^{-\varphi''(0)x^2/(2h)}$ in a small neighborhood of zero.  Formally, since \eqref{eModz} gives $|h^ja_j(z)| \leq C(C|z|)^j$ for $|z| < R_0/2$ and $0 < j \leq N(h) = (eC_1h)^{-1}$, we have for some $r_0 > 0$ sufficiently small the estimate
	\[
	 	\left\| \sum_{j=1}^{N(h)} h^ja_j(z)\right\|_{B(r)} \leq Cr, \quad 0 < r \leq r_0.
	\]
	Since $a_0(z)$ is close to $1$ and $\Im \varphi(z)$ is close to 
$\frac{1}{2}\Im \varphi''(0)z^2$ when $z$ is close to $0$, 
we can consider $r$ sufficiently small and fixed to obtain
	\[
	 	\|u(x;h)\| \geq \|u(x;h)\|_{L^2((-r,r))} \geq \frac{1}{C}\left(\int_{-r}^r \exp\left(\frac{1}{Ch}x^2\right)\,dx\right)^{1/2} \geq \frac{1}{C}h^{1/4}
	\]
	when $h$ is sufficiently small.

	Since we have shown that $\|H_hu(x;h)\| \leq Ce^{-1/(Ch)}$ 
and $\|u(x;h)\| \geq h^{1/4}/C$, this completes the proof of the theorem in this special case.
\qed
%\end{proof}

\paragraph{$\bullet$ Uniformity on compact sets.}
We also remark that the exponential resolvent growth may generally be made uniform on compact subsets of $\Lambda$, the interior of the semiclassical pseudospectrum defined in \eqref{semiclass.pseudospec}.  In the case of the Schr\"odinger operator, for any $z \in \Lambda$ we may take $x_0$ with $\Im V(x_0) = \Im z$ and define the phase function
\[
	\varphi(x) = \pm \int_{x_0}^x \sqrt{z-V(y)}\,\dd y,
\]
with the sign chosen so that $\Im \varphi''(x_0) > 0$.  

The exponentially rapid resolvent growth then follows from having $C > 0$ and $R_1 > R_0 > 0$ for which the estimates
\begin{equation}\label{e.unif.1}
	|x - x_0| < R_1 \implies \frac{1}{C} \leq |\varphi'(x)| \leq C
\end{equation}
and
\begin{equation}\label{e.unif.2}
	R_0 < |x-x_0| < R_1 \implies \Im \varphi(x) \geq \frac{1}{C}
\end{equation}
hold: the former gives \eqref{ejToThej} by way of \eqref{eC1phiprime} and the latter gives \eqref{eDecaygradchi}, which are together sufficient to prove exponential growth of the resolvent.

The condition $\Im V'(x_0) \neq 0$ means that $x_0$ may be chosen locally as a continuous function of $\Im z$; it is then a simple matter to verify that \eqref{e.unif.1} holds with uniform constants in a neighborhood of $z \in \Lambda$.  Local uniformity of \eqref{e.unif.2} then follows from $\Im \varphi''(x_0) > 0$.

%\begin{proof}[Proof of Theorem~\ref{Thm.Shifted}]
\subsection{Proof of Theorem~\ref{Thm.Shifted}}
As usual, we may make the change of variables $y = h^{-1/2}x$, arriving at an operator unitarily equivalent to $h H_h$:
\[
	\tilde H_h = -h^2\frac{\dd^2}{\dd y^2} + y^2 + 2ih^{1/2}y 
  - h \simeq h H_h.
\]
We will let $h^{-1} = \Re z$ so that
\[
 	H_h-z \simeq h^{-1}(\tilde H_h-(1+it)), \quad t = \frac{\Im z}{\Re z}.
\]

In this case, we have a symbol
\[
	\tilde{f}(y,\eta) = \eta^2 + y^2 + 2ih^{1/2}y -h,
\]
and so we cannot directly apply the results of \cite{Dencker-Sjostrand-Zworski_2004}.  We can, however, adapt the previous proof.

We can see that $1+it = \tilde{f}(y_0,\eta_0)$ for some $(y_0,\eta_0) \in \Real^2$ if and only if
\[
	1+h = \eta_0^2 + y_0^2, \quad t = 2ih^{1/2}y_0,
\]
implying that $|y_0| \leq 1+h$ and 
\[
	|t| \leq 2h^{1/2}(1+h).
\]

The eikonal equation $\tilde{f}(y, \varphi'(y))=0$ is then solved by integrating
\begin{align*}
	\varphi'(y) & = -\sqrt{(1+h+it)-ih^{1/2}y -y^2} 
	\\ & = -(1+h-y^2)\left(1+\frac{i(t-2h^{1/2}y)}{1+h-y^2} + \mathcal{O}\left(\left(\frac{t-2h^{1/2}y}{1+h-y^2}\right)^2\right)\right).
\end{align*}
As usual, we choose the sign of the square root to satisfy a bracket condition like \eqref{bracket.schro}.

To ensure that, on a uniform neighborhood of $y_0$, the phase $\varphi'(y)$ is analytic and there exists $C > 0$ for which $1/C \leq |\varphi'(y)| \leq C$, we must assume that $1+h-y^2$ is bounded away from zero by a constant.  We therefore assume that $|y_0| < 1-\eps$ for some $\eps > 0$.  We remark that this is connected to assuming that $1+it$ is in the interior of the range of the symbol. The scaling argument shows that this assumption is equivalent to the hypothesis that
\[
	\Im z \leq 2\left(1-\eps+(\Re z)^{-1}\right)\sqrt{\Re z}.
\]
The term $(\Re z)^{-1}$ is negligible as $\Re z \to \infty$.

Note that
\[
 	\varphi''(y) = \frac{-ih^{1/2} - y}{\varphi'(y)}
\]
and so, since we chose the sign of the square root to have $\varphi'(y_0) < 0$, we have
\[
 	\frac{1}{C}\sqrt{h} \leq \Im \varphi''(y) \leq C\sqrt{h}
\]
on a sufficiently small but fixed neighborhood. 

We may then construct exponentially accurate approximations to a solution of
\[
 	e^{-i\varphi/h} \tilde H_h e^{i\varphi/h}a(x;h) = 0
\]
exactly as in the case of the semiclassical Schr{\"o}dinger operator above.  Taking a cutoff function $\chi$ supported near $y_0$ and writing
\[
 	u(y;h) = \chi(y)e^{i\varphi(y)/h}a(y;h)
\]
as before, we have the same argument for exponential smallness except 
where we commute $\tilde H_h$ past the cutoff function $\chi$.  Because $\Im \varphi(y)$ increases more slowly, we only have
\[
 	|e^{i\varphi(y)/h}| \leq e^{-1/(Ch^{1/2})}, \quad y \in \operatorname{supp}(\chi').
\]
We arrive at 
\[
 	\|\tilde H_h u(y;h)\|_{L^2} \leq Ce^{-1/(Ch^{1/2})}
\]
and
\[
 	\|u(y;h)\|_{L^2} \geq \frac{1}{C}h^{1/8}
\]
for $h$ sufficiently small and positive.  Conjugating by the change of variables $y = (\Re z)^{1/2}x$ with which we began and ignoring harmless powers of $h = (\Re z)^{-1}$ proves the theorem.
\qed
%\end{proof}

\section{Spectral projections of the rotated oscillator}
\label{app.Pk}

Asymptotics for the norms of the spectral projections of the rotated oscillator may be found using an established integral formula involving the Hermite functions, pointed out in \cite{Bagarello-2010-374}, and asymptotics of the Legendre polynomials. This approach is analogous to one applied to the shifted harmonic oscillator in \cite[Sec.~2]{Mityagin-2013}, and it simplifies and sharpens the result of \cite{Davies-Kuijlaars_2004}.

The eigenfunctions of $H$ defined in \eqref{HO.rotated} can be written explicitly through a complex scaling of Hermite functions:
\begin{equation}\label{rs.HO.EF}
\begin{aligned}
H \left(h_k(e^{i \theta/2}x)\right)  & = (2k+1) h_k(e^{i \theta/2}x), \\
\end{aligned}
\end{equation}
where $h_k$ denote (normalized) Hermite functions. 
The eigenfunctions of the adjoint $H^*$  are obtained by complex conjugation, and it is easy to verify 
the biorthonormal relation
\begin{equation}\label{hk.bio}
\langle h_k(e^{i \theta/2}x), h_l(e^{-i \theta/2}x) \rangle = \delta_{kl}.
\end{equation}
One may show that the eigenfunctions are complete in $L^2(\Real)$, the corresponding eigenvalues are algebraically simple, and there are no other points in the spectrum, \cf~\cite{Davies_1999a}.
Consequently, the spectral projections~$P_k$ of $H$ can be written as
\begin{equation}
\begin{aligned}
P_k & = h_k(e^{i \theta/2}x) \langle h_k(e^{-i \theta/2}x), \cdot \rangle. 
\end{aligned}
\end{equation}
The Cauchy-Schwarz inequality, the biorthonormal relation \eqref{hk.bio}, and symmetries of the Hermite functions can be used to show that  
\begin{equation}
\begin{aligned}
\|P_k\| &= \| h_k(e^{i \theta/2}x)\| \|h_k(e^{-i \theta/2}x)\| = \| h_k(e^{i \theta/2}x)\|^2. 
\end{aligned}
\end{equation}
The resulting norms can be calculated explicitly. As pointed out by F. Bagarello in \cite{Bagarello-2010-374},  
the formula \cite[Eq.~2.20.16.2, p.~502]{Prudnikov-1986-2} 
\begin{equation}
\begin{aligned}
& \int_0^{\infty} e^{-a x^2} H_k(bx) H_k(cx) \dd x 
\\& \quad = \frac{2^{k-1} k! 
\sqrt{\pi}}{a^{(k+1)/2}} (b^2 + c^2 -a)^{k/2} \mathcal{P}_k 
\left(
\frac{bc}{\sqrt{a(b^2+c^2-a)}}
\right),
\end{aligned}
\end{equation}
which is valid if $\Re a >0$ and where $\mathcal{P}_k$ are the Legendre polynomials, 
yields
\begin{equation}\label{Pk.norm}
\|P_k\| = \frac{1}{(\cos \theta)^{1/2}} \mathcal{P}_k \left( \frac{1}{\cos \theta}\right).
\end{equation}
The final result comes from the asymptotic behaviour of $\mathcal{P}_k(x)$, \cf~the Laplace-Heine formula \cite[Thm.~8.21.1]{Szego-1959} or its generalization \cite[Thm.~8.21.2]{Szego-1959} which provides further terms: 
\begin{equation}
\begin{aligned}
\|P_k\| & = 
\frac{1}{\sqrt{2\pi k |\sin \theta|}} 
\left(
\frac{1 + |\sin \theta|}{\cos \theta}
\right)^{k+1/2} 
\left(1 + o(1)  \right).
\end{aligned} 
\end{equation}
We note that this exponential factor agrees with \eqref{rot.osc.rate} and with \cite{Davies-Kuijlaars_2004}, \cite{Henry}, following a simple computation in \cite[Ex.~3.6]{Viola_2013}.

%---------------------------%
\subsection*{Acknowledgment}
%---------------------------%
%
The work has been partially supported
by the project RVO61389005, the GACR grant No.\ 14-06818S
and the MOBILITY project 7AMB12FR020.
P.S. acknowledges the SCIEX Programme, the work has been conducted within the SCIEX-NMS Fellowship, project 11.263, and thanks F. Bagarello for showing and discussing his results \cite{Bagarello-2010-374} during the research visit in Palermo in October 2013. 
J.V. is grateful for the support of ANR NOSEVOL (Project number: ANR 2011 BS01019 01).

%--------------%
% BIBLIOGRAPHY %
%--------------%
%
%\newpage
{\footnotesize
\bibliographystyle{abbrv}
\bibliography{bib}
}
\end{document}